\numberwithin{equation}{section}
\newtheorem{Theorem}{Theorem}[section]
\newtheorem{Lemma}[Theorem]{Lemma}
\newtheorem{Corollary}[Theorem]{Corollary}
\newtheorem{Remark}[Theorem]{Remark}
\newtheorem{Example}[Theorem]{Example}
\newtheorem{Definition}[Theorem]{Definition}
\numberwithin{equation}{section}
\begin{document}

\title[ON APPROXIMATELY COHEN-MACAULAY BINOMIAL EDGE IDEAL]{ON APPROXIMATELY COHEN-MACAULAY BINOMIAL EDGE IDEAL}%
\author{Sohail Zafar}%
\address{Abdus Salam School of Mathematical Sciences, GCU, Lahore Pakistan}%
\email{sohailahmad04@gmail.com}%

\thanks{This research was partially supported by Higher Education Commission, Pakistan}
\subjclass[2000]{05C05, 05C38, 05E40, 13H10.}
\keywords{Binomail edge ideal, Approximately Cohen-Macaulay, Trees, Cycle}%

\begin{abstract}
Binomial edge ideals $I_{G}$ of a graph $G$ were introduced by \cite{herzog}. They found some classes of graphs $G$ with the property that $I_{G}$ is a Cohen-Macaulay ideal. This might happen only for few classes of graphs. A certain generalization of being Cohen-Macaulay, named approximately Cohen-Macaulay, was introduced by S. Goto in \cite{goto}. We study classes of graphs whose binomial edge ideal are approximately Cohen-Macaulay. Moreover we use some homological methods in order to compute their Hilbert series.
\end{abstract}
\maketitle
\begin{center}

INTRODUCTION
\end{center}

Let $K$ denote a field. Let $G$ denote a connected undirected graph over the
vertices labeled by $[n]=\{1,2,\dots,n\}.$ Let $R=K[x_{1},\dots,x_{n}]$ denote
the polynomial ring in $n$ variables over the field $K$. The edge ideal $I$ of $G$ is generated
by all $x_{i}x_{j}$ , $i<j$ , such that $\{i,j\}$ forms an edge of $G.$ This
notion was studied by Villarreal \cite{villarreal} where it is also discussed under
which circumstances $R/I$ is a Cohen-Macaulay ring. It seems to be hopeless
to characterize all of the graphs $G$ such that $R/I$ is a Cohen-Macaulay
ring.

In a similar way one might define the binomial edge ideal $I_{G}\subseteq
S=K[x_{1},\dots,x_{n},y_{1},\dots,y_{n}].$ It is generated by all binomials $%
x_{i}y_{j}-x_{j}y_{i}$ , $i<j$ , such that $\{i,j\}$ is an edge of $G.$ In
the paper of V. Ene, J. Herzog, T. Hibi \cite{ene}, the authors start with the
systematic investigation of $I_{G}$. There is a primary decomposition of $%
I_{G}$. Moreover there are examples such that $S/I_{G}$ is a Cohen-Macaulay
ring. Furthermore, the authors believe that it is hopeless to characterize
those $G$ such that the binomial edge ideal is Cohen-Macaulay in general.

A generalization of the notion of a Cohen-Macaulay ring was introduced by S.
Goto \cite{goto} under the name approximately Cohen-Macaulay (see also the
Definition 1.4). Not so many examples of approximately Cohen-Macaulay rings are known. Note
that they are not a domain. In the present paper we collect a few graphs $G$
such that the associated ring $S/I_{G}$ is an approximately Cohen-Macaulay
ring, where $I_{G}$ denotes the binomial edge ideal.

In fact, we give a complete characterization of all connected trees whose binomial edge ideal are approximately Cohen-Macaulay. Such trees are described as $3$-star like. We also prove that the cycle of length $n$ is approximately Cohen-Macaulay.

It is well known fact that the canonical module $\omega (S/I)$ of a Cohen-Macaulay ring is a Cohen-Macaulay module. The converse is not true.
Examples of non Cohen-Macaulay rings such that $\omega (S/I)$ is Cohen-Macaulay are approximately Cohen-Macaulay rings (as follows by the definition). So we
investigate the canonical modules and the modules of deficiency. As
applications of our investigations we study the Hilbert series of our
examples.

The paper is organized as follows: In the first section there is a summary of preliminary results. In section 2 we give the characterization of approximately Cohen-Macaulay binomial edge ideals for trees. In section 3 we prove that the binomial edge ideal of any cycle is approximately Cohen-Macaulay. In addition we compute the Hilbert series of the corresponding ideals. In order to do that for the $n$-cycle, we include some investigations on the canonical module of the binomial edge ideal of a complete graph.
\section{PRELIMINARY RESULTS}

In this section we will fix the notations we use in the sequel. Moreover we
summarize auxiliary results that we need in our paper. We have tools from
combinatorial algebra, canonical modules and commutative algebra. In
general we fix an arbitrary field $K$ and the polynomial ring $%
S=K[x_{1},\dots,x_{n},y_{1},\dots,y_{n}]$ in $2n$ variables $x_{1},\dots,x_{n}$
and $y_{1},\dots,y_{n}.$

\bigskip
\bigskip
\noindent {\Large (a) Combinatorial Algebra.}

By $G$ a graph on the vertex set $[n]$ we always understand a connected simple
graph. The binomial edge ideal $J_{G}\subset S$ is defined as the ideal
generated by all binomials $f_{ij}=x_{i}y_{j}-x_{j}y_{i},$ $1\leq i<j\leq n,$
where $\{i,j\}$ is an edge of G. This construction has been found in \cite{herzog}. For a subset  $T\subset \lbrack n]$ let be $G_{[n]\backslash{T}}$ the graph
obtained from $G$ by deleting all vertices that belongs to $T$. Let $\ c=c(T)
$ denote the number of connected components of $G_{[n]\backslash{T}}$. Let  $%
G_{1},...G_{c}$ \ denote these components of $G_{[n]\backslash{T}}.$ Let $\tilde{G}$
denotes the complete graph on the vertex set of $G$. Define
$$P_{T}(G)=(\cup _{i\in T}\{x_{i},y_{i}\},J_{\tilde{G}_{1}},\dots,J_{\tilde{G}%
_{C(T)}})$$
where $\tilde{G}_{i},$ $i=1,...,c=c(T),$ denotes the complete graph on the
vertex set of the connected component of $G_{i}.$ Then $P_{T}(G)\subset S$
is a prime ideal of height $n+|T|-c,$ where $|T|$ denotes the number of
elements of $T$. \ Moreover
$$J_{G}=\cap _{T\subseteq \lbrack n]}P_{T}(G).$$
For these and related facts we refer to \cite{herzog}. For us the following lemma is
important.

\begin{Lemma}
With the previous notations it follows that  $J_{G}\subset P_{T}(G)$ is a
minimal prime if and only if either  $T=\emptyset $ \ or $T\neq \emptyset $
\ and for each $i\in T$ \ we have that  $c(T\backslash\{i\})<c(T).$
\end{Lemma}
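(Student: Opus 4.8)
The plan is to recast the whole statement as a statement about \emph{inclusions} among the primes $P_T(G)$ appearing in the decomposition $J_G=\bigcap_{T}P_T(G)$, and then read off minimality. Throughout I write $P_T:=P_T(G)$. Since $S$ is a polynomial domain and each $P_T$ is prime, the minimal primes of $J_G$ are exactly the inclusion-minimal members of the family $\{P_T\}_{T\subseteq[n]}$; thus $P_T$ is a minimal prime precisely when there is no $T'\neq T$ with $P_{T'}\subsetneq P_T$. So I must understand when $P_{T'}\subseteq P_T$.

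The central step, and the main obstacle, is a clean containment criterion. First, the only linear forms lying in $P_T$ are the $K$-span of $\{x_i,y_i:i\in T\}$: from the explicit description $P_T=(\{x_i,y_i:i\in T\})+J_{\tilde{G}_1}+\dots+J_{\tilde{G}_c}$ the quotient $S/P_T$ is a domain in which every coordinate $x_a,y_a$ with $a\notin T$ stays nonzero, so $P_{T'}\subseteq P_T$ forces $T'\subseteq T$. Next, a generator $f_{ab}$ lies in $P_T$ if and only if $\{a,b\}\cap T\neq\emptyset$, or $a,b\notin T$ lie in one connected component of $G_{[n]\backslash T}$. The nontrivial half is that $f_{ab}\notin P_T$ when $a,b\notin T$ lie in distinct components; I would prove this by evaluating on an explicit rank-one configuration, sending $(x_c,y_c)\mapsto \lambda_c(s_j,t_j)$ for $c$ in the $j$-th component (with independent directions per component) and $x_i,y_i\mapsto 0$ for $i\in T$, a map that kills all generators of $P_T$ but sends $f_{ab}$ to $\lambda_a\lambda_b(s_jt_k-s_kt_j)\neq 0$. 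Combining these two facts gives the criterion
$$(\star)\qquad P_{T'}\subseteq P_T \iff T'\subseteq T \text{ and } G_{D\backslash T} \text{ is connected or empty for each component } D \text{ of } G_{[n]\backslash T'}.$$

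With $(\star)$ in hand the lemma is short. The case $T=\emptyset$ is immediate: $T'\subseteq\emptyset$ forces $T'=\emptyset$, so $P_\emptyset$ is always minimal, as claimed. For $T\neq\emptyset$ I prove the equivalent statement that $P_T$ is \emph{not} minimal iff some $i\in T$ has $c(T\backslash\{i\})\ge c(T)$. For the easy implication, if $c(T\backslash\{i\})\ge c(T)$ then deleting $i$ does not increase the component count, i.e.\ $i$ is not a cut vertex of its component in $G_{[n]\backslash(T\backslash\{i\})}$; applying $(\star)$ to the pair $(T\backslash\{i\},T)$ (where $D\backslash T=D\backslash\{i\}$) yields $P_{T\backslash\{i\}}\subseteq P_T$, and this inclusion is strict since $x_i\in P_T\setminus P_{T\backslash\{i\}}$. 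Hence $P_T$ is not minimal.

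The converse carries the real content, but reduces to a single slick observation. Suppose $P_T$ is not minimal, so by the variable argument there is $T'\subsetneq T$ with $P_{T'}\subseteq P_T$; pick any $i\in T\backslash T'$ and let $D$ be the component of $G_{[n]\backslash T'}$ containing $i$, and set $W=T\backslash T'$. Since $D\cap T=D\cap W$, criterion $(\star)$ for $T'$ says $G_{D\backslash W}$ is connected or empty. The component $E$ of $i$ in $G_{[n]\backslash(T\backslash\{i\})}$ is the component of $i$ in $G_{(D\backslash W)\cup\{i\}}$; adjoining the single vertex $i$ to the connected-or-empty graph $G_{D\backslash W}$ makes $E\backslash\{i\}$ again connected or empty, so $i$ is not a cut vertex and $(\star)$ gives $P_{T\backslash\{i\}}\subseteq P_T$, i.e.\ $c(T\backslash\{i\})\ge c(T)$. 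Thus the required $i$ exists and the condition in the lemma fails. I expect the proof of $(\star)$ — concretely the claim $f_{ab}\notin P_T$ across components — to be the only genuinely delicate point; once it is secured, both directions follow from the elementary remark that adding or deleting one vertex from a graph with a connected complement cannot split that complement.
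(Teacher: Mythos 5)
Your proof is correct, but there is nothing in the paper to compare it against step by step: the paper does not prove this lemma internally, it simply cites Corollary 3.9 of \cite{herzog}. So what you have produced is a genuinely self-contained argument where the paper defers to a reference, and your route (reduce minimality of $P_T(G)$ to inclusions within the finite family $\{P_{T'}(G)\}$, establish the containment criterion $(\star)$, then translate the cut-point condition into the connectivity statement that $(\star)$ requires) is essentially the natural one and close in spirit to the cited source. The individual steps check out: $T'\subseteq T$ is forced because the variables $x_a,y_a$ with $a\notin T$ remain nonzero in the domain $S/P_T(G)$; your membership test for $f_{ab}$ in $P_T(G)$ is right; the identity $c(T\setminus\{i\})=c(T)-k+1$, where $k$ is the number of components of $G_{[n]\setminus T}$ adjacent to $i$, makes ``$i$ is not a cut vertex'' equivalent to $c(T\setminus\{i\})\geq c(T)$, which is exactly what both directions of your argument use; and the observation that the component $E$ of $i$ in $G_{[n]\setminus(T\setminus\{i\})}$ equals the component of $i$ in $G_{(D\setminus W)\cup\{i\}}$ is verified by noting that every path from $i$ inside $([n]\setminus T)\cup\{i\}$ stays inside $D$. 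One small repair is needed in the proof of $(\star)$: over a small field (say $\mathbb{F}_2$) you cannot choose pairwise non-proportional directions for \emph{all} components simultaneously, since $\mathbb{P}^1(\mathbb{F}_2)$ has only three points. But to show a single $f_{ab}\notin P_T(G)$ you only need to separate the two components containing $a$ and $b$: send those to $(1,0)$ and $(0,1)$ and every other component to $(1,0)$, or alternatively let the $s_j,t_j$ be new indeterminates and evaluate into a polynomial ring. With that adjustment your proof is complete, and it has the merit of making the paper's key combinatorial input verifiable without consulting \cite{herzog}.
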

For the proof see corollary 3.9 of \cite{herzog}. It is noteworthy to say that  $J_{G}$
is the intersection of prime ideals. That is, $S/J_{G}$ is reduced. Finally
we observe that $S/J_{G}$ is a graded ring with natural grading induced
by the grading of $S$.

\bigskip
\noindent {\Large (b) Canonical Modules.}

Let $S$ as above denote the polynomial ring. Let $M$ denote the finitely
generated graded $S-$module. For some of our arguments we use a few basic
facts about duality. In the following we introduced the modules of
deficiency related to $M$.

\begin{Definition}

For an integer $i$ define
$$\omega ^{i}(M):=Ext_{S}^{2n-i}(M,S)$$
the $i-th$ module deficiency  to $M$. Note that $\omega ^{i}(M)=0$ for $%
i<depth(M)$ and $i>dim(M)=d.$ For $i=d$ we call
$$\omega (M)=\omega ^{d}(M)$$
the canonical module of $M$.
\end{Definition}
These modules have been introduced in \cite{Schenzel}. See also \cite{Schenzel} for some properties
of $\omega ^{i}(M)$ and related facts. In particular we have
$$dim(\omega (M))=\dim (M)$$

and $dim(\omega ^{i}(M))\leq i$ for all $0\leq i<d.$

\begin{Lemma}

With the previous notation let $I\subset S$ be the homogenous ideal then :

\begin{enumerate}
\item There is a natural homomorphism
$$
S/I\rightarrow \omega (\omega (S/I))\cong Hom(\omega (S/I),\omega (S/I))
$$
which is an isomorphism if $S/I$ is a Cohen-Macaulay ring.

\item Suppose that $S/I$ is Cohen-Macaulay ring with $dim(S/I)>0.$ Suppose
that $\omega (S/I)$ is an ideal of $S/I$ then $(S/I)/\omega (S/I)$ is a
Gorenstein ring with $dim(S/I)-1.$
\end{enumerate}
\end{Lemma}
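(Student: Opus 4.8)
The plan is to run everything through Grothendieck duality over the Gorenstein ring $S$, whose graded canonical module is $\omega_S\cong S(-2n)$ and for which the modules of deficiency $\omega^i(M)=\Ext_S^{2n-i}(M,S)$ are the graded Matlis duals of the local cohomology modules $\HH^i(M)$. I will abbreviate $R=S/I$ and $d=\dim R$, and throughout identify $\omega(S/I)=\omega^d(R)$ with the canonical module $\omega_R$ of $R$, using that for the surjection $S\to R$ the $S$-dual $\Ext_S^{2n-d}(-,S)$, restricted to maximal Cohen--Macaulay $R$-modules, agrees with the $R$-linear dual $\Hom_R(-,\omega_R)$.

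For part (1) I would first produce the natural map $R\to\omega(\omega(R))$ as the degree-zero cohomology of the biduality morphism $M\to \mathrm{R}\Hom_S(\mathrm{R}\Hom_S(M,\omega_S),\omega_S)$, which is an isomorphism in the derived category because $\omega_S$ is dualizing; this morphism is natural and exists for every finitely generated graded module, independently of any Cohen--Macaulay hypothesis (cf.\ \cite{Schenzel}). When $R$ is Cohen--Macaulay of dimension $d$, the complex $\mathrm{R}\Hom_S(R,\omega_S)$ has cohomology concentrated in the single degree $2n-d$, equal to $\omega_R$, hence is quasi-isomorphic to $\omega_R$ placed in that degree; applying $\mathrm{R}\Hom_S(-,\omega_S)$ a second time and reading off cohomology upgrades the biduality morphism to an isomorphism $R\xrightarrow{\sim}\omega(\omega(R))$. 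For the second identification I would use that on maximal Cohen--Macaulay modules the canonical dual is computed by $\Hom_R(-,\omega_R)$, so that $\omega(\omega(R))\cong\Hom_R(\omega_R,\omega_R)$, and finally invoke the classical fact that the homothety map $R\to\Hom_R(\omega_R,\omega_R)$ is an isomorphism for a Cohen--Macaulay ring admitting a canonical module. Composing these gives the displayed isomorphism.

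For part (2), write $\mathfrak a\subset R$ for the ideal with $\mathfrak a\cong\omega_R$ and apply $\Hom_R(-,\omega_R)$ to the short exact sequence $0\to\mathfrak a\to R\to R/\mathfrak a\to 0$. Since $\dim R/\mathfrak a=d-1<d$ while $\omega_R$ is unmixed of dimension $d$ (its associated primes are exactly the highest-dimensional primes of $R$), the torsion term $\Hom_R(R/\mathfrak a,\omega_R)$ vanishes; combining this with $\Hom_R(R,\omega_R)=\omega_R$, with $\Ext^1_R(R,\omega_R)=0$, and with $\Hom_R(\mathfrak a,\omega_R)\cong\Hom_R(\omega_R,\omega_R)\cong R$ from part (1), the long exact sequence collapses to
$$0\to\omega_R\xrightarrow{\ \psi\ } R\to \Ext^1_R(R/\mathfrak a,\omega_R)\to 0,$$
and a codimension count shows the higher $\Ext$ modules vanish, so $R/\mathfrak a$ is Cohen--Macaulay of dimension $d-1$ with $\omega_{R/\mathfrak a}\cong\Ext^1_R(R/\mathfrak a,\omega_R)=\operatorname{coker}\psi$. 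Identifying $\operatorname{im}\psi$ with $\mathfrak a$ then yields $\omega_{R/\mathfrak a}\cong R/\mathfrak a$, and a Cohen--Macaulay ring isomorphic to its own canonical module is Gorenstein, of dimension $d-1$ as claimed.

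The main obstacle is the bookkeeping in part (2): confirming that $\operatorname{coker}\psi$ really is the deficiency module realizing the canonical module of $R/\mathfrak a$, and, above all, pinning down that $\operatorname{im}\psi=\mathfrak a$ rather than merely some ideal isomorphic to it. Under the chosen isomorphism $\omega_R\cong\mathfrak a$ together with the endomorphism identification $\Hom_R(\mathfrak a,\omega_R)\cong R$, the map $\psi$ is restriction of homomorphisms along $\mathfrak a\hookrightarrow R$, and one must check carefully that this carries $\omega_R$ exactly \emph{onto} the ideal $\mathfrak a$; it is precisely this equality that forces the canonical module of the quotient to be the quotient ring itself. A secondary point to watch is the compatibility of all the identifications with the internal grading, i.e.\ tracking the shift in $\omega_S\cong S(-2n)$, which matters if one wants the graded rather than merely the ungraded form of the conclusions.
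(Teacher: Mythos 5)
The paper itself gives no argument for this lemma (it simply refers to Schenzel and to Bruns--Herzog), and your proposal is essentially the standard proof found in those references: duality over the Gorenstein cover $S$ for part (1), and the canonical-ideal argument (Bruns--Herzog, Prop.\ 3.3.18) for part (2). Part (1) is essentially complete, up to one imprecision: in the non-Cohen-Macaulay case the natural map $S/I\to\omega(\omega(S/I))$ is \emph{not} the degree-zero cohomology of the biduality morphism (the $H^0$ of the bidual complex is $S/I$ itself); one needs the truncation morphism $\omega^{d}(S/I)[-(2n-d)]\to \mathrm{R}\mathrm{Hom}_S(S/I,\omega_S)$ onto the lowest nonvanishing cohomology, and then dualizes that and takes $H^0$. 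In the Cohen-Macaulay case, where the dual complex is concentrated in one degree, your argument is correct as stated.

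Part (2) contains two genuine gaps. First, you assert $\dim R/\mathfrak a=d-1$ (where $R=S/I$, $\mathfrak a\cong\omega_R$) with no justification; this is precisely the nontrivial fact that a proper canonical ideal has height one, and without the inequality $\operatorname{ht}\mathfrak a\geq 1$ your vanishing $\Hom_R(R/\mathfrak a,\omega_R)=0$ has no basis either. To get it, localize at a minimal prime $p$: $\mathfrak a_p\cong\omega_{R_p}$ is an ideal of the artinian ring $R_p$ whose length equals that of $R_p$, hence $\mathfrak a_p=R_p$, so $\mathfrak a\not\subset p$; the reverse inequality $\operatorname{ht}\mathfrak a\leq 1$ then follows from the depth lemma applied to $0\to\mathfrak a\to R\to R/\mathfrak a\to 0$, which gives $\depth R/\mathfrak a\geq d-1$ and hence $\dim R/\mathfrak a\geq d-1$. (Similarly, your ``codimension count'' for the vanishing of the higher $\Ext$'s should be replaced by the long exact sequence plus $\Ext^i_R(\mathfrak a,\omega_R)\cong\Ext^i_R(\omega_R,\omega_R)=0$ for $i>0$, valid since $\omega_R$ is maximal Cohen-Macaulay.) Second, the equality $\operatorname{im}\psi=\mathfrak a$, which you yourself single out as the main obstacle, is left unresolved -- yet it is the crux of the proof and is a short check: an element $a\in\mathfrak a$ corresponds under $\omega_R\cong\Hom_R(R,\omega_R)$ to the map $r\mapsto ra$; the restriction map $\psi$ sends it to the homothety by $a$ on $\mathfrak a$; and under the homothety isomorphism $R\cong\Hom_R(\mathfrak a,\mathfrak a)$ (valid by part (1), since $\mathfrak a\cong\omega_R$) this element is $a$ again. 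Hence $\operatorname{im}\psi$ is exactly $\mathfrak a$, $\operatorname{coker}\psi= R/\mathfrak a$, and the rest of your argument goes through. With these two points supplied, your proposal becomes the complete standard proof.
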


For the proof we refer to \cite{Schenzel} and \cite{burns}.

\bigskip
\noindent {\Large (c) Approximately Cohen-Macaulay rings.}

For our purpose here we need a certain generalization of Cohen-Macaulay rings
that was originally introduced by S.Goto (see \cite{goto})
in the case of local rings.

\begin{Definition}

Let $R$ denote a commutative ring of finite dimension $d$ and $I\subset R$
an ideal. Then

\begin{enumerate}
\item $\mathbf{\ }Assh_{R}(R/I)=\{p\in Ass_{R}(R/I):\dim R/p=\dim R/I\}.$

\item $U_{R}(I)=\cap _{p\in Assh_{R}(R/I)}I(p)$ where $I=\cap _{p\in
Ass_{R}(R/I)}I(p)$ denotes a minimal primary decomposition of the ideal $I$.%
\textbf{\ }
\end{enumerate}

That is $U_{R}(I)$ describes the equidimensional part of the primary
decomposition of the ideal $I.$
\end{Definition}
As an analogue to Goto's notion of approximately Cohen-Macaulay rings we
define a graded version of it.

\begin{Definition}
Let $R=\underset{i\geq 0}{\oplus }R_{i}$ denote a standard $K-$algebra,
where $K=R_{0}.$ It is called approximately Cohen-Macaulay if $R_{+}=%
\underset{i>0}{\oplus }R_{i}$ contains a homogeneous element $x$ such that $%
R/x^{n}R$ is a Cohen-Macaulay ring of $dim(R)-1$ for all $n\geq 1.$
\end{Definition}
A characterization of approximately Cohen-Macaulay rings can be done by the
following theorem.

\begin{Theorem}

Let $I\subset S$ denote a homogenous ideal and $d=dim(S/I).$ Then
the following conditions are equivalent.

\begin{description}
\item[(i)] $S/I$ is approximately Cohen-Macaulay.

\item[(ii)] $S/U_{S}(I)$ is a $d-$dimensional Cohen-Macaulay ring and $%
depth(S/I)\geq d-1.$

\item[(iii)] $\omega ^{d}(S/I)$ is Cohen-Macaulay module of dimension $d$
and $\omega ^{d-1}(S/I)$ is either zero or a $(d-1)-$dimensional
Cohen-Macaulay module.
\end{description}
\end{Theorem}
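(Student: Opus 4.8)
The plan is to prove the equivalence by passing every condition through local cohomology. Write $R=S/I$, let $\mathfrak m$ be the irrelevant maximal ideal and $d=\dim R$. The one tool I would use throughout is graded local duality over the $2n$-dimensional ring $S$ (whose canonical module is $S(-2n)$): for every $i$ it gives a functorial isomorphism
$$\omega^{i}(R)=\Ext_{S}^{2n-i}(R,S)\cong\bigl(\HH^{i}(R)^{\vee}\bigr)(2n),$$
where $(-)^{\vee}$ denotes the graded Matlis dual. Since Matlis duality is exact and preserves dimension, $\omega^{i}(R)$ is zero, nonzero of a prescribed dimension, or Cohen--Macaulay exactly when $\HH^{i}(R)$ is. Because $\depth R=\min\{i:\HH^{i}(R)\neq0\}$ and $d=\max\{i:\HH^{i}(R)\neq0\}$, the assertions in (iii) are statements about the two highest local cohomology modules, and $\depth R\geq d-1$ translates into $\HH^{i}(R)=0$ for all $i\leq d-2$. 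The strategy is then to read (ii) and (iii) as two encodings of the same local-cohomology profile of $R$, and to connect that profile to the element $x$ of the definition.

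I would treat (ii) $\Leftrightarrow$ (iii) first, as it is purely homological. Write $I=U_{S}(I)\cap J$ with $J$ the intersection of the primary components of dimension $<d$, giving the short exact sequence
$$0\longrightarrow U_{S}(I)/I\longrightarrow R\longrightarrow S/U_{S}(I)\longrightarrow0$$
in which $U_{S}(I)/I\cong(U_{S}(I)+J)/J$ has dimension at most $d-1$. Its local cohomology long exact sequence yields at once $\HH^{d}(R)\cong\HH^{d}(S/U_{S}(I))$, hence $\omega^{d}(R)=\omega(S/U_{S}(I))$. Assuming (ii), the Cohen--Macaulayness of $S/U_{S}(I)$ makes $\omega^{d}(R)$ its canonical module, so it is Cohen--Macaulay of dimension $d$, and it collapses the sequence to $\HH^{i}(R)\cong\HH^{i}(U_{S}(I)/I)$ for $i\leq d-1$. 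Together with $\depth R\geq d-1$ this forces $\depth\bigl(U_{S}(I)/I\bigr)\geq d-1\geq\dim\bigl(U_{S}(I)/I\bigr)$, so $U_{S}(I)/I$ is either zero or Cohen--Macaulay of dimension exactly $d-1$; dualizing gives the $\omega^{d-1}$ statement of (iii). For the converse I would run these isomorphisms backwards, using that the dimension restriction on $\omega^{d-1}(R)$ (never a nonzero module of strictly smaller dimension) together with $\omega^{d}(R)=\omega(S/U_{S}(I))$ confines the defect to level $d-1$, recovering both $\depth R\geq d-1$ and the Cohen--Macaulayness of $S/U_{S}(I)$.

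For (i) $\Leftrightarrow$ (ii) I would run Goto's deformation argument. Assuming (ii), I would choose by prime avoidance a homogeneous $x\in R_{+}$ that is a nonzerodivisor on the Cohen--Macaulay ring $S/U_{S}(I)$ and avoids the finitely many associated primes of the lower-dimensional part; such an $x$ is a parameter that is filter-regular on $R$, i.e. $0:_{R}x^{n}$ has finite length for every $n$. Feeding the exact sequences $0\to(0:_{R}x^{n})\to R\xrightarrow{\,x^{n}\,}R\to R/x^{n}R\to0$ into local cohomology, the finite-length torsion disturbs only $\HH^{0}$, while $\HH^{d-1}(R)$ and $\HH^{d}(R)$ are governed by the Cohen--Macaulay module $S/U_{S}(I)$; a diagram chase then yields $\HH^{i}(R/x^{n}R)=0$ for $i\neq d-1$, so $R/x^{n}R$ is Cohen--Macaulay of dimension $d-1$ for all $n\geq1$. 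Conversely, reading these sequences simultaneously in $n$, the Cohen--Macaulayness of every $R/x^{n}R$ forces $\HH^{i}(R)=0$ for $i\leq d-2$ and pins down the top component as unmixed and Cohen--Macaulay, giving (ii).

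The step I expect to be the main obstacle is the quantifier ``for all $n\geq1$'' together with correctly locating the homological defect. A single good power $x^{n_{0}}$ controls only a finite piece of the torsion chain $0:_{R}x\subseteq0:_{R}x^{2}\subseteq\cdots$, so the real work is to make the vanishing of $\HH^{i}(R/x^{n}R)$ uniform in $n$; this is exactly where filter-regularity of $x$ (finiteness, hence $\HH^{0}$-confinement, of the torsion) is indispensable. It is also what forces the sharp two-level profile $\depth R\geq d-1$ with $S/U_{S}(I)$ Cohen--Macaulay, rather than mere good behaviour of the single quotient $\omega^{d-1}(R)$, and keeping the deficiency defect confined to level $d-1$ is the delicate matching point of the whole equivalence.
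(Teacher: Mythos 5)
The paper itself offers no proof of this theorem (it cites Goto's local result and asserts that the graded case is identical), so your proposal must be judged against what a correct proof requires, and it fails at the decisive step: the construction of the element $x$ in (ii)$\Rightarrow$(i). Set $R=S/I$ and $W=U_S(I)/I$, so that $0\to W\to R\to S/U_S(I)\to 0$ with $W$ the lower-dimensional part. You choose $x$ avoiding the associated primes of $S/U_S(I)$ \emph{and} those of $W$, i.e.\ a nonzerodivisor on $R$. That is exactly the wrong choice. For a nonzerodivisor, the sequence $0\to R(-n\deg x)\stackrel{x^n}{\longrightarrow}R\to R/x^nR\to 0$ together with $\HH^{d-2}(R)=0$ (which (ii) grants) gives $\HH^{d-2}(R/x^nR)\cong(0:_{\HH^{d-1}(R)}x^n)$, whose graded dual is $\omega^{d-1}(R)/x^n\omega^{d-1}(R)$. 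Under (ii) one has $\omega^{d-1}(R)\cong\omega(W)$, which is nonzero unless $R$ is already Cohen--Macaulay, and graded Nakayama gives $\omega(W)/x^n\omega(W)\neq 0$. Hence $\HH^{d-2}(R/x^nR)\neq 0$ and $R/x^nR$ is \emph{never} Cohen--Macaulay of dimension $d-1$; your diagram chase proves the opposite of what you need (test it on Example 2.2 of the paper: dimension $6$, depth $5$, so factoring out a nonzerodivisor leaves depth $4$ and dimension $5$). The approximately Cohen--Macaulay element is typically a \emph{zerodivisor}: one takes homogeneous $x\in\Ann(W)$ outside every minimal prime of $U_S(I)$ (possible, since $W$ localizes to zero at those primes). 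Then $\Tor_1^S(S/U_S(I),S/(I,x^n))=(0:_{S/U_S(I)}x^n)=0$ yields the exact sequence $0\to W\to R/x^nR\to (S/U_S(I))/x^n(S/U_S(I))\to 0$, an extension of two Cohen--Macaulay modules of dimension $d-1$ (that $W$ is Cohen--Macaulay or zero follows from (ii) by the same depth count you already performed), so $R/x^nR$ is Cohen--Macaulay of dimension $d-1$ for every $n$. Note that with the correct $x$ the uniformity in $n$ --- the point you single out as the main difficulty --- is automatic, since $x^n$ again annihilates $W$ and is again a nonzerodivisor on $S/U_S(I)$; filter-regularity is the wrong mechanism, because the obstruction sits in $\HH^{d-2}$, not in finite-length torsion.

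Your step (iii)$\Rightarrow$(ii) is also a genuine gap, and in fact no argument can do what you claim there. Condition (iii) carries no information about $\omega^i(R)$ for $i\le d-2$, so $\depth R\ge d-1$ cannot be ``recovered'' from it: for $I=(x_3,x_4)\cap(x_0,x_1,x_2,x_3)\subset K[x_0,\dots,x_4]$ (a $3$-dimensional linear cone together with a disjoint $1$-dimensional one), the sequence $0\to S/I\to S/(x_3,x_4)\oplus S/(x_0,x_1,x_2,x_3)\to K\to 0$ shows that $\omega^3(S/I)\cong\omega(S/(x_3,x_4))$ is Cohen--Macaulay of dimension $3$ and $\omega^{2}(S/I)=0$, while $\depth S/I=1$; thus (iii) holds and (i), (ii) fail. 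The equivalence is only correct if (iii) is read as also requiring $\omega^i(S/I)=0$ for $i<d-1$ (equivalently $\depth S/I\ge d-1$); a blind attempt should have flagged this missing hypothesis rather than claimed a derivation of it. Moreover, even after adding the depth condition, Cohen--Macaulayness of $S/U_S(I)$ is not a formal reversal of your forward implication: a non-Cohen--Macaulay unmixed ring can have a Cohen--Macaulay canonical module (that is precisely why the class under study is nontrivial). What Cohen--Macaulayness of $\omega^d(R)$ buys is that the $S_2$-ification $\omega^d(\omega^d(R))$ is Cohen--Macaulay; one must then show $S/U_S(I)$ coincides with it, e.g.\ by noting that $\omega^{d-1}(S/U_S(I))$ embeds into $\omega^{d-1}(R)$, has dimension at most $d-2$ because $S/U_S(I)$ is unmixed, and hence vanishes since a Cohen--Macaulay module admits no nonzero submodule of smaller dimension. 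None of this appears in your sketch. (Your direction (ii)$\Rightarrow$(iii), by contrast, is sound.)
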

\begin{proof}

In the case of local ring the above theorem was proved by Goto in the paper
of \cite{goto}. The proof in the graded case follows by the same
arguments. We have to adopt the graded situation from the local one. We omit
the details.
\end{proof}

\begin{Remark}

A necessary condition for $S/I$ to be an approximately Cohen-Macaulay ring
is that $\dim S/p\geq \dim S/I-1$ for all $p\in Ass(S/I)$
\end{Remark}
\section{TREES}
In this section we will characterize all trees which the property
that the associated binomial edge ideal defines an approximately
Cohen-Macaulay ring. A graph G is called tree if it is connected and has no
cycles. The simplest tree is the line. The binomial edge ideal of a line is a complete
intersection. So it defines a Cohen-Macaulay ring (see \cite{herzog}). In our consideration we do
not consider the line in detail. To classify all the trees that are approximately Cohen-Macaulay, we have to introduce a new terminology called
$3$-star like trees.
\begin{Definition}
A tree $G$ is called 3-star like if there are no vertices of degree $\geq 4$ and if either there is at most
one vertex of degree 3 or there is at most one edge with the property  that both of its
vertices are of degree 3. Then the line is of course 3-star like. Other types of 3-star like trees (see the figures 1 and 2).
\end{Definition}

\begin{figure}[h]
\includegraphics[width=7cm]{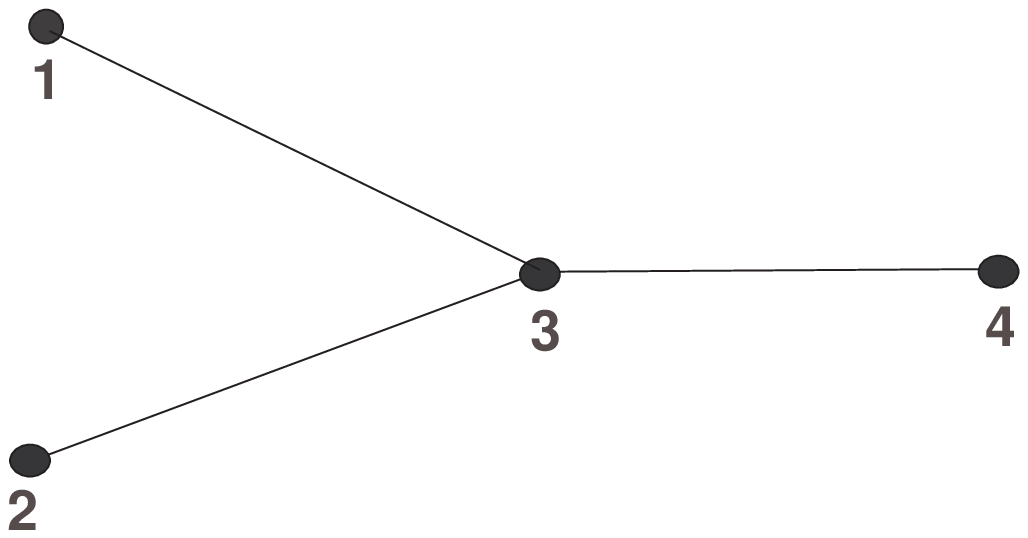}\\
\caption{}\label{}
\end{figure}

\begin{Example}

Consider the simplest example of the 1st type of 3-star like tree as shown in
figure 1. Its binomial edge ideal is approximately Cohen-Macaulay of
dimension 6, depth 5 and the Hilbert Series is%
\begin{equation*}
H(S/J_{G},t)=\frac{1+2t-2t^{3}}{(1-t)^{6}}.
\end{equation*}

\end{Example}
\begin{figure}[h]
 \includegraphics[width=7cm]{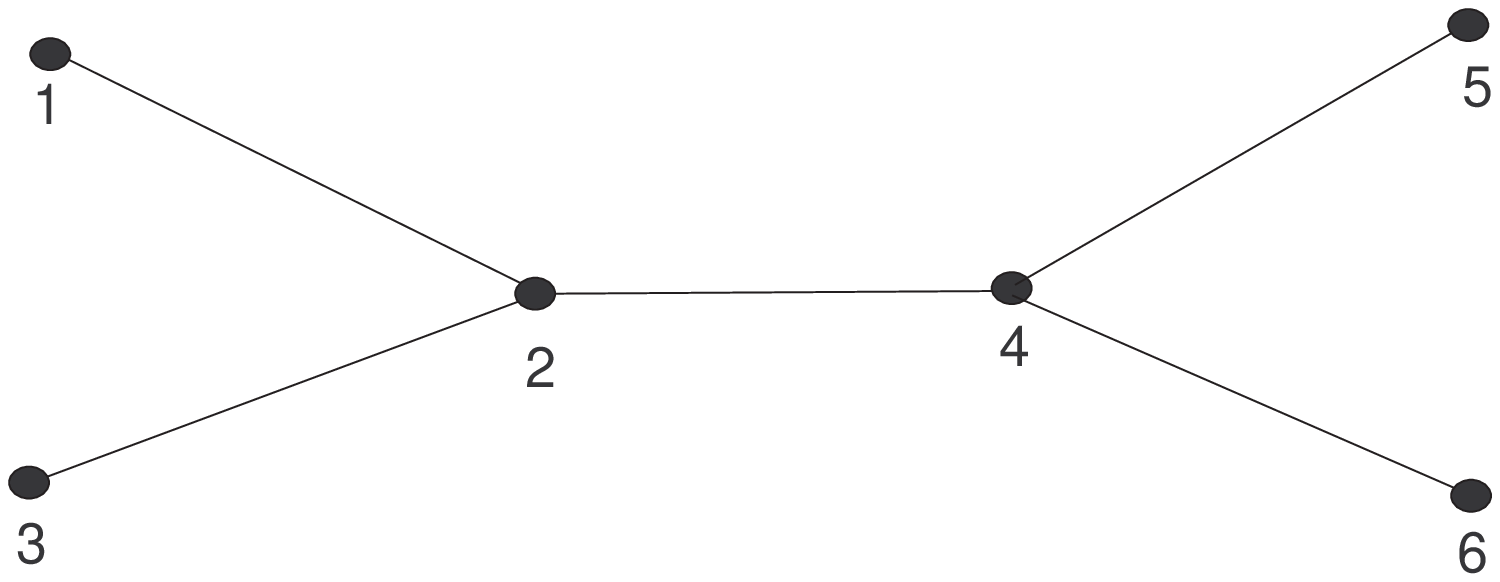}\\
 \caption{}\label{}
\end{figure}

\begin{Example}

The simplest example in case of 2nd type of 3-star like tree is shown above.
Its binomial edge ideal is approximately Cohen-Macaulay of dimension 8,
depth 7 and the Hilbert Series is%
\begin{equation*}
H(S/J_{G},t)=\frac{1+4t+5t^{2}-3t^{4}}{(1-t)^{8}}.
\end{equation*}

\end{Example}
In order to prove the approximately Cohen-Macaulay property of binomial edge ideals we need the following construction principle. It will be useful also in different circumstances.

\begin{Lemma}Let $G$ be any connected graph with vertices set $[n]$ having at least one vertex of degree 1, choose one of them and label it by $n$. Let $G^\prime$ be a graph on vertices set $[n+1]$ by attaching one edge $\{n,n+1\}$  to the graph $G$. Now  $G$ is approximately Cohen-Macaulay if and only if $G^\prime$ is approximately Cohen-Macaulay.
\end{Lemma}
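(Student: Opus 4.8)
The plan is to realize $S'/J_{G'}$ as a hyperplane section of a polynomial extension of $S/J_G$ and then transfer the criterion of Theorem 1.6. Since the only edge added in passing from $G$ to $G'$ is $\{n,n+1\}$, the only new generator of the binomial edge ideal is $f:=x_ny_{n+1}-x_{n+1}y_n$, so with $S'=S[x_{n+1},y_{n+1}]$ and $A:=S/J_G$ we obtain
\[
B:=S'/J_{G'}\cong A[x_{n+1},y_{n+1}]/(f).
\]
First I would show that $f$ is a nonzerodivisor on $A[x_{n+1},y_{n+1}]$. Because $n$ is a vertex of degree $1$, Lemma 1.1 forbids $n\in T$ for any minimal prime $P_T(G)$: deleting $n$ from $T$ returns the leaf $n$ to the graph and can only keep the number of components the same (if its neighbour lies outside $T$) or increase it (if the neighbour lies in $T$), so the minimality condition $c(T\setminus\{n\})<c(T)$ fails. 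Hence $x_n\notin P_T(G)$ for every associated prime of the reduced ring $A$, i.e. $x_n$ is a nonzerodivisor on $A$; viewing $f$ as a polynomial in $x_{n+1},y_{n+1}$ with coefficients $x_n$ and $-y_n$ in $A$, McCoy's theorem then yields that $f$ is a nonzerodivisor on $A[x_{n+1},y_{n+1}]$.

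Granting this, the numerical invariants transfer immediately. Passing to the polynomial extension raises dimension and depth by $2$ and preserves Cohen--Macaulayness, while cutting by the nonzerodivisor $f$ lowers each by $1$; thus $\dim B=\dim A+1=:d+1$ and $\depth B=\depth A+1$. In particular $\depth A\ge d-1$ holds iff $\depth B\ge(d+1)-1$, which is exactly the depth clause of Theorem 1.6(ii) on the two sides. For the module-theoretic clauses I would apply $\Ext^{\bullet}_{S'}(-,S')$ to the short exact sequence $0\to A[x_{n+1},y_{n+1}](-2)\xrightarrow{f}A[x_{n+1},y_{n+1}]\to B\to 0$. Flat base change along $S\hookrightarrow S'=S[x_{n+1},y_{n+1}]$ identifies $\Ext^{\bullet}_{S'}(A[x_{n+1},y_{n+1}],S')$ with $\omega^{\bullet}_S(A)[x_{n+1},y_{n+1}]$, and---once $f$ is known to be a nonzerodivisor on these deficiency modules as well---the long exact sequence collapses to
\[
\omega^{b}_{S'}(B)\cong \omega^{b-1}_{S}(A)[x_{n+1},y_{n+1}]/(f).
\]
Since polynomial extension followed by cutting with a nonzerodivisor preserves and reflects both ``being zero'' and ``being Cohen--Macaulay of the expected dimension'', the two clauses of Theorem 1.6(iii) for $\omega^{d+1}_{S'}(B),\omega^{d}_{S'}(B)$ become equivalent to the corresponding clauses for $\omega^{d}_{S}(A),\omega^{d-1}_{S}(A)$, yielding the asserted equivalence.

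The main obstacle is precisely the proviso invoked in the previous paragraph: that $f$ is a nonzerodivisor not only on $A[x_{n+1},y_{n+1}]$ but on each $\omega^i_S(A)[x_{n+1},y_{n+1}]$, equivalently that $(x_n,y_n)$ is contained in no associated prime of any $\omega^i_S(A)$. For the canonical module $i=d$ this is clean, since $\operatorname{Ass}\omega^{d}_S(A)=\operatorname{Assh}(A)$ is the set of top-dimensional minimal primes $P_T(G)$, none of which contains $x_n$ by the leaf argument above. For $i<d$ one must locate $\operatorname{Ass}\omega^i_S(A)$; by graded local duality these are the attached primes of $H^{i}_{\mathfrak m}(A)$, and I would bound them using the same leaf structure---reducing modulo the regular element $x_n$ to the binomial edge ideal of $G\setminus\{n\}$---to show that they avoid the leaf variables $(x_n,y_n)$. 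Establishing this associated-prime statement in full is the technical heart of the argument; once it is in place, everything else is formal. I note in passing that the more hands-on route through Theorem 1.6(ii) is deceptive here, since $J_{G'}$ acquires extra top-dimensional primes of the form $P_{T\cup\{n\}}(G')$ beyond the extensions of the top primes of $J_G$, so the unmixed part $U_{S'}(J_{G'})$ does not simply extend $U_S(J_G)$; this is why I prefer the deficiency-module formulation, where the bookkeeping is absorbed into the single nonzerodivisor $f$.
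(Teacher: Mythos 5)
Your proposal follows essentially the same route as the paper's own proof: the same regular element $f=x_ny_{n+1}-x_{n+1}y_n$, the same short exact sequence $0\to S'/J_G(-2)\xrightarrow{f} S'/J_G\to S'/J_{G'}\to 0$, the same long exact sequence of deficiency modules, and the same reduction to criterion (iii) of Theorem 1.6; your leaf-plus-McCoy argument that $f$ is a nonzerodivisor on $S'/J_G$ is exactly the paper's use of Lemma 1.1. But, as you yourself flag, the proposal is incomplete at the decisive point. To collapse the long exact sequence into the two isomorphisms you need $f$ to be injective on $\omega^{d+1}(S'/J_G)\cong\omega^{d-1}(S/J_G)\otimes_S S'$, equivalently that no associated prime of $\omega^{d-1}(S/J_G)$ contains both $x_n$ and $y_n$. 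The leaf argument controls only $\operatorname{Ass}(S/J_G)$; this settles the top module, since $\operatorname{Ass}\,\omega^{d}(S/J_G)=\operatorname{Assh}(S/J_G)\subseteq\operatorname{Ass}(S/J_G)$, but it says nothing about $\operatorname{Ass}\,\omega^{d-1}(S/J_G)$, which need not meet $\operatorname{Ass}(S/J_G)$ at all: for instance it equals $\{\mathfrak m\}$ whenever $\omega^{d-1}(S/J_G)$ is nonzero of finite length, and then $f$, whose coefficients $x_n,-y_n$ lie in $\mathfrak m$, kills any socle element of $\omega^{d-1}(S/J_G)\otimes_S S'$. Without that injectivity the long exact sequence yields only $\omega^{d}(S'/J_{G'})\cong\operatorname{coker}\bigl(f\mid\omega^{d+1}(S'/J_G)\bigr)$ together with an extension $0\to\operatorname{coker}\bigl(f\mid\omega^{d+2}(S'/J_G)\bigr)\to\omega^{d+1}(S'/J_{G'})\to\ker\bigl(f\mid\omega^{d+1}(S'/J_G)\bigr)\to 0$, and the two-way transfer of Cohen--Macaulayness via Theorem 1.6(iii) no longer follows. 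So, as submitted, this is a proof with a genuine hole, not a proof.

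You should know, however, that you have put your finger on precisely the step where the paper itself is unrigorous: its entire justification is the clause ``$f$ is $\omega^{d+1}(S'/J_G)$-regular since $f$ is $S/J_G$-regular,'' and that inference is invalid in general, for the reason just given --- regularity on a module does not pass to regularity on its Ext modules into $S$. So your instinct about where the technical heart lies is correct and sharper than the paper's treatment; what is missing, both in your sketch (attached primes of $H^{d-1}_{\mathfrak m}(S/J_G)$ controlled via the leaf) and in the paper, is an actual proof that $(x_n,y_n)$ avoids every associated prime of $\omega^{d-1}(S/J_G)$. Your closing remark about criterion (ii) is also well taken: $U_{S'}(J_{G'})$ can indeed contain top-dimensional primes $P_{T\cup\{n\}}(G')$ not arising from top-dimensional primes of $J_G$ (already for paths, $P_{\{n\}}(G')$ is a minimal prime of maximal dimension while $P_{\{n\}}(G)$ is not even associated to $J_G$), which is a correct structural observation absent from the paper and a fair reason to prefer the deficiency-module formulation.
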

\begin{proof}
Suppose that $J_{G}$ and $J_{G^\prime}$ denotes the binomial edge ideal of the corresponding graphs. Let $dim(S/J_{G})=d$ and $depth(S/J_{G})=d-1$. Now $J_{G^\prime}=
(J_{G},f)$ where $f=x_ny_{n+1}-x_{n+1}y_n$ and $S^\prime=S[x_{n+1},y_{n+1}]$, therefore $dim(S^\prime/J_{G})=d+2$ and $depth(S^\prime/J_{G})=d+1.$

Now $n\notin T$  for all  $T$ $\subseteq $ $[n]$ such that $%
c(T\backslash \{i\})<c(T).$ Which implies $x_{n},y_{n}$ $\notin P_{T}(G)$
for all $\ P_{T}(G)\in Ass(S/J_{G}),$ hence $f$ is not a zero divisor in $S/J_{G}
$ and is regular. Therefore $dim(S^\prime/J_{G^\prime})=d+1$ and $%
depth(S^\prime/J_{G^\prime})=d.$

Consider the exact sequence

$$0\rightarrow S^\prime /J_{G}(-2)\mathop\rightarrow\limits^f S^\prime /J_{G}\rightarrow S^\prime /J_{G^\prime}\rightarrow 0.$$

Apply $Hom(S^\prime, .)$ to above sequence we get the long exact sequence
\begin{multline*}
0\rightarrow \omega ^{d+2}(S^\prime /J_{G})\mathop\rightarrow\limits^f \omega^{d+2}(S^\prime /J_{G})(2)\rightarrow \omega^{d+1}(S^\prime /J_{G^\prime})\\
\rightarrow \omega^{d+1}(S^\prime /J_{G})\mathop
\rightarrow\limits^f \omega^{d+1}(S^\prime /J_{G})(2)\rightarrow \omega^{d} (S^\prime /J_{G^\prime})\rightarrow 0.
\end{multline*}

Clearly $f$ is $\omega^{d+2}(S^\prime /J_{G})-regular$. Moreover $f$ is $\omega^{d+1}(S^\prime /J_{G})-regular$ since $f$ is $S/J_G$-regular.
So multiplication by $f$ is injective and we get the following two isomorphisms:

$$\omega^{d+1}(S^\prime /J_{G^\prime})\cong \omega^{d+2}(S^\prime /J_{G})/f\omega^{d+2}(S^\prime /J_{G})(2)$$
and
$$\omega^{d}(S^\prime /J_{G^\prime})\cong \omega^{d+1}(S^\prime /J_{G}))/f\omega^{d+1}(S^\prime /J_{G})(2).$$

Hence $\omega
^{d+2}(S^\prime/J_{G})$ and $\omega ^{d+1}(S^\prime/J_{G})$ is of dimension $d+2$ and $d+1$ resp. Therefore both are Cohen-Macaulay if and only if $\omega^{d+1}(S^\prime /J_{G^\prime})$ and $\omega^{d}(S^\prime /J_{G^\prime})$ are Cohen-Macaulay of dimension $d+1$ and $d$ resp.
Therefore from theorem 1.6 $S^\prime/J_{G^\prime}$ is approximately Cohen-Macaulay if and only if $S /J_{G}$ is approximately Cohen-Macaulay.

\end{proof}

\begin{Corollary}Let $G$ be a tree with vertices set $[n]$,
then $S/J_{G}$ is approximately Cohen-Macaulay if and only if G is
3-star like.
\end{Corollary}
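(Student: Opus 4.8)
The plan is to prove both implications through the combinatorics of the minimal primes $P_T(G)$, using Lemma 2.4 for sufficiency and Remark 1.7 for necessity. Throughout I use that $S/J_G$ is reduced, so $\operatorname{Ass}(S/J_G)$ consists exactly of the minimal primes $P_T(G)$ singled out in Lemma 1.1, together with $\dim S/P_T(G) = n - |T| + c(T)$ coming from $\height P_T(G) = n + |T| - c(T)$. For a tree it is convenient to record that if $e(T)$ is the number of edges meeting $T$, then $c(T) = 1 - |T| + e(T)$ (a forest has $\#\text{vertices}-\#\text{edges}$ components), whence $\dim S/P_T(G) = n + 1 - 2|T| + e(T)$. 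In particular $\dim S/P_\emptyset(G) = n+1$, and $T=\emptyset$ always gives an associated prime.

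For the sufficiency I would argue by reduction along Lemma 2.4. I first read the definition of $3$-star like structurally: $G$ has maximal degree at most $3$, and the vertices of degree $3$ form either the empty set, a single vertex, or a single edge. Accordingly there are three base shapes: a single edge $K_2$ (more generally a line), the claw $K_{1,3}$, and the double star $H$ obtained by joining two degree-$3$ vertices and giving each two pendant leaves. Every $3$-star like tree is obtained from one of these by repeatedly attaching a pendant edge at a vertex of degree $1$ — that is, by lengthening the legs — and such a move neither creates a vertex of degree $\geq 4$ nor alters the number or adjacency of the degree-$3$ vertices, so it stays within the class. By Lemma 2.4 each step preserves the approximately Cohen--Macaulay property, so it suffices to treat the base graphs: a line has a complete intersection binomial edge ideal, hence is Cohen--Macaulay and a fortiori approximately Cohen--Macaulay, while $K_{1,3}$ and $H$ are precisely Examples 2.2 and 2.3, where one verifies Theorem 1.6(ii) directly.

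For the necessity, suppose $G$ is not $3$-star like and exhibit an associated prime whose dimension is too small for Remark 1.7. If some vertex $v$ has $\deg v \geq 4$, take $T=\{v\}$: then $\dim S/P_T(G) = n-1+\deg v \geq n+3$, and $P_T(G)$ is minimal since $c(\emptyset)=1 < \deg v = c(\{v\})$. Otherwise $G$ has maximal degree $3$ but its degree-$3$ vertices are neither a single vertex nor a single edge, so there are at least two of them and they do not form one adjacent pair; since three vertices of a tree never span a triangle, there is a non-adjacent pair $u,w$ of degree-$3$ vertices. Taking $T=\{u,w\}$ gives $e(T)=6$ and $\dim S/P_T(G)=n+3$, and $P_T(G)$ is minimal because deleting either of $u,w$ strictly drops the component count. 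In both cases $\dim S/J_G \geq n+3$, while the associated prime $P_\emptyset(G)$ has dimension $n+1 < \dim S/J_G - 1$, so $S/J_G$ violates the necessary condition of Remark 1.7 and is not approximately Cohen--Macaulay.

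The main obstacle is the sufficiency, and within it the two base graphs $K_{1,3}$ and $H$: one must compute the equidimensional part $U_S(J_G)$, check that $S/U_S(J_G)$ is Cohen--Macaulay of the full dimension $d$, and — the delicate point — show $\depth S/J_G = d-1$ rather than merely a weaker bound. The reduction via Lemma 2.4 and the necessity argument are then essentially formal once the bookkeeping through $c(T)=1-|T|+e(T)$ is in place; the only subtlety there is to read the definition of $3$-star like so that two non-adjacent vertices of degree $3$ are genuinely forbidden, which is exactly the configuration excluded by the dimension computation above.
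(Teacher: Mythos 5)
Your proposal is correct and follows essentially the same route as the paper: necessity by exhibiting the associated primes $P_{\{v\}}(G)$ (for $\deg v \geq 4$) and $P_{\{u,w\}}(G)$ (for two non-adjacent degree-$3$ vertices) of dimension $\geq n+3$ against the associated prime $P_{\emptyset}(G)=J_{\tilde{G}}$ of dimension $n+1$, and sufficiency by induction via the construction principle of Lemma 2.4 with the line, Example 2.2 ($K_{1,3}$) and Example 2.3 (the double star) as base cases. Your extra bookkeeping ($c(T)=1-|T|+e(T)$, the explicit minimality checks from Lemma 1.1, and the structural reading of the definition of $3$-star like) only makes the paper's argument more explicit.
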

\begin{proof}
Suppose the contrary that  $G$ is not 3-star like then we have two cases:

\begin{description}
\item[Case 1] If $G$ contains at least one vertex of degree $d\geq 4$ say $i.
$ Then $T=\{i\}$ and $c(T)=d$ so we have $\dim(S/P_{T}(G))=n+d-1\geq n+3.$

\item[Case 2] If G has two vertices of degree $3$ which are not adjacent say
$i$ and $j.$ Then $T=\{i,j\}$ and  $c(T)=5.$ Therefore we have $%
dim(S/P_{T}(G))=n+3.$
\end{description}
Therefore in both cases $dim(S/J_{G})\geq n+3.$

On the other hand  $J_{\tilde{G}}$ $\in Ass(S/J_{G})$ and $%
dim(S/J_{\tilde{G}})=n+1$ so $depth(S/J_{G})\leq n+1$ and hence $S/J_{G}$
is not approximately Cohen-Macaulay.

Conversely, in order to prove that any $3$-star like tree is approximately
Cohen-Macaulay we will use induction on $n,$ the number of vertices. For the
case of a line there is nothing to prove. For $n=4$
and $n=6$ it is true see example 2.2. and 2.3 above.
The general case follows by the construction principle in Lemma 2.4.
\end{proof}

Now we will discuss some properties of the trees which are approximately
Cohen-Macaulay. In Corollary 2.5. we have shown that  3-star like  trees
with vertices set $[n]$ have dimension $n+2$ and depth $n+1$. Now the Hilbert
series of of 3-star like trees can be easily computed.

\begin{Lemma}
    With the notations of Lemma 2.4, We have $$H(S^\prime /J_{G^\prime},t)=(1-t^{2})H(S^\prime /J_{G},t).$$
\end{Lemma}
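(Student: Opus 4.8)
The plan is to read off the Hilbert series identity directly from the short exact sequence already constructed in the proof of Lemma 2.4. Recall that $J_{G^\prime}=(J_G,f)$ with $f=x_ny_{n+1}-x_{n+1}y_n$ a homogeneous element of degree $2$, and that $f$ was shown to be a nonzerodivisor on $S^\prime/J_G$. Hence multiplication by $f$ gives the short exact sequence of graded modules
$$0\rightarrow S^\prime /J_{G}(-2)\mathop\rightarrow\limits^f S^\prime /J_{G}\rightarrow S^\prime /J_{G^\prime}\rightarrow 0.$$
The first step is simply to invoke the additivity of the Hilbert series on short exact sequences of finitely generated graded modules: whenever $0\to A\to B\to C\to 0$ is exact with graded maps, one has $H(B,t)=H(A,t)+H(C,t)$, so $H(C,t)=H(B,t)-H(A,t)$.

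The second step is to track the grading shift. The leftmost term is $S^\prime/J_G(-2)$, and a shift by $-2$ multiplies the Hilbert series by $t^{2}$, i.e. $H\bigl(S^\prime/J_G(-2),t\bigr)=t^{2}\,H(S^\prime/J_G,t)$. Substituting into the additivity relation yields
$$H(S^\prime /J_{G^\prime},t)=H(S^\prime /J_{G},t)-t^{2}H(S^\prime /J_{G},t)=(1-t^{2})H(S^\prime /J_{G},t),$$
which is exactly the claimed identity. The only thing that makes this work is that $f$ is a nonzerodivisor, so that the kernel of multiplication by $f$ is genuinely $0$ and the sequence is exact on the left; this was already established in Lemma 2.4, so I may assume it here.

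There is essentially no obstacle: the statement is a formal consequence of grading-additivity of the Hilbert series applied to an exact sequence whose exactness and degree shift were verified when the sequence was introduced. The one point worth stating carefully, rather than any hard step, is the convention that the shift $(-2)$ raises degrees by $2$ and therefore introduces the factor $t^{2}$; with that convention fixed, the displayed computation is immediate. I would present the proof as the two-line argument above, citing the exact sequence from Lemma 2.4 and the regularity of $f$ to justify exactness on the left.
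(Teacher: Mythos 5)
Your proof is correct and follows exactly the same route as the paper: both use the short exact sequence $0\to S^\prime/J_G(-2)\stackrel{f}{\to} S^\prime/J_G\to S^\prime/J_{G^\prime}\to 0$ from Lemma 2.4 and the additivity of Hilbert series together with the degree-shift factor $t^2$. You have merely made explicit the steps (additivity, shift convention, and the role of $f$ being a nonzerodivisor for left-exactness) that the paper leaves implicit in its one-line proof.
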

\begin{proof}
Consider the exact sequence $$0\rightarrow S^\prime /J_{G}(-2)\mathop\rightarrow\limits^f S^\prime /J_{G}\rightarrow S^\prime /J_{G^\prime}\rightarrow 0.$$ Hence we have a required result.
\end{proof}

\begin{Corollary}Let $G$ be a 3-star like tree
with vertex set $[n].$ Then the Hilbert Series of $S/J_{G}$ is
\begin{eqnarray*}
H(S/J_{G},t) &=&\frac{(1+2t-2t^{3})(1+t)^{n-4}}{(1-t)^{n+2}}\text{ \ for }n>3
\\
&&\text{and \ }\frac{(1+4t+5t^{2}-3t^{4})(1+t)^{n-6}}{(1-t)^{n+2}}\text{ for
}n>5\text{ respectively}
\end{eqnarray*}
for the first resp. the second type of $3$-starlike trees.
\end{Corollary}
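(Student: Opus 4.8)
The plan is to argue by induction on the number of vertices $n$, running the same inductive construction used in Corollary 2.5 but now tracking the Hilbert series through Lemma 2.7. The base cases are supplied by Example 2.2 for the first type (the claw $K_{1,3}$ on $n=4$ vertices, with series $(1+2t-2t^{3})/(1-t)^{6}$) and by Example 2.3 for the second type (the tree on $n=6$ vertices, with series $(1+4t+5t^{2}-3t^{4})/(1-t)^{8}$). In each case the claimed formula reduces to exactly these expressions, since $(1+t)^{n-4}=1$ and $(1+t)^{n-6}=1$ respectively.

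For the inductive step I would first convert the relation in Lemma 2.7 into a clean multiplier on $H(S/J_{G},t)$. Since $S^\prime=S[x_{n+1},y_{n+1}]$ adjoins two free variables to $S$, one has $H(S^\prime/J_{G},t)=(1-t)^{-2}H(S/J_{G},t)$. Substituting this into Lemma 2.7 and using $1-t^{2}=(1-t)(1+t)$ yields
$$H(S^\prime/J_{G^\prime},t)=(1-t^{2})(1-t)^{-2}H(S/J_{G},t)=\frac{1+t}{1-t}\,H(S/J_{G},t).$$
Thus attaching one pendant edge, which raises $n$ to $n+1$, multiplies the Hilbert series by $(1+t)/(1-t)$. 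A direct check shows this is precisely the effect of passing from $n$ to $n+1$ in either claimed formula: the power of $(1+t)$ in the numerator increases by one and the power of $(1-t)$ in the denominator increases by one, while the polynomial factor $(1+2t-2t^{3})$ (resp.\ $(1+4t+5t^{2}-3t^{4})$) is unchanged. Hence if the formula holds for $G$, it holds for $G^\prime$.

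It remains to verify that the inductive construction actually reaches every $3$-star like tree of the given type. Here I would confirm the combinatorial claim that any first-type tree possessing a vertex of degree $3$ is obtained from the claw $K_{1,3}$ by repeatedly attaching pendant edges at degree-$1$ vertices, thereby extending the three arms one vertex at a time, and that any second-type tree is obtained from the $6$-vertex base tree in the same way. Each such attachment is exactly the operation of Lemma 2.4 applied at the current leaf of an arm, and it lengthens an arm without creating any new vertex of degree $3$, so it remains within the relevant type. This is the step requiring the most care, since one must check that the pendant-edge moves suffice to realize arbitrary arm lengths and that no move violates the degree constraints defining the two types. Once this is established the induction closes, and both Hilbert series follow.
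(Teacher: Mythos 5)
Your proof is correct and follows essentially the same route as the paper: induction on $n$ with Examples 2.2 and 2.3 as base cases and the Hilbert series relation of Lemma 2.6 (which you cite as Lemma 2.7) driving the inductive step, the factor $(1-t^2)(1-t)^{-2}=(1+t)/(1-t)$ being exactly the paper's computation. Your explicit verification that every tree of each type is reached by pendant-edge attachments from the base graph is a worthwhile clarification of a step the paper leaves implicit.
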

\begin{proof}
We will prove this by induction on $n$. Consider the first case of $3$-starlike
trees. For $n=4$ it is true, see example
2.2 resp. 2.3. Suppose the claim is true for $n.$ That is,
 $$H(S^\prime /J_{G},t)=\frac{(1+2t-2t^{3})(1+t)^{n-4}}{(1-t)^{n+4}}.$$ Now by Lemma 2.6. we have
\begin{equation*}
H(S^\prime /J_{G^\prime},t)=\frac{(1+2t-2t^{3})(1+t)^{n-3}}{(1-t)^{n+3}}
\end{equation*}%
as required.

Similar arguments might be used in order to calculate the Hilbert Series
in the second case of $3$-starlike trees.
\end{proof}

\section{CYCLE}
In this section we study the algebraic properties of the  binomial edge ideal associated to a cycle. A graph G is called cycle if it is a closed directed path, with no repeated vertices other than the starting and ending vertices. We denote the cycle of length $n$ by $C$ and its binomial edge ideal by $I_{C}$.
To study further properties of $I_{C}$ or $S/I_{C}$ we shall need few basic properties from \cite{herzog}.
If $$I_{C}=\cap _{T\subseteq \lbrack n]}P_{T}(C)$$
then
$$
dimS/P_{T}(C)=n+1\text{\ \ }if\text{ \ }T=\emptyset \text{ \ and \ }\
dimS/P_{T}(C)\leq n\text{ \ }if\ \ T\not=\emptyset.$$

 Hence $$dim(S/I_{C})=n+1\text{ \ }and\text{ \ }U_{S}(I_{C})=P_{\emptyset}(C)=J_{\tilde{G}}.$$
Moreover $P_{T}(C)$ is minimal prime of $S/I_{C}$ if either (i) $T=\emptyset$
or (ii) if $T\neq \emptyset $ \ and $\ |T|>1$ and no
two elements $i,j\in T$ \ belongs to the same edge of C.

\begin{Theorem}$%
x_{1},y_{1}-x_{2},\dots,y_{n-1}-x_{n},y_{n}$ is the system of parameters for $S/I_{C}$.
\end{Theorem}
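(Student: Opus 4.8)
The claim is that the $2n$ linear forms
$$x_1,\; y_1-x_2,\; y_2-x_3,\;\dots,\; y_{n-1}-x_n,\; y_n$$
form a system of parameters for $S/I_C$, where $\dim(S/I_C)=n+1$. Since $S$ has $2n$ variables and $\dim(S/I_C)=n+1$, a system of parameters must consist of exactly $n+1$ elements cutting the dimension down to $0$ — but the list above has $2n$ elements. So the first thing I would do is reconcile the count: the intended statement must be that these $2n$ linear forms generate an ideal $\mathfrak{q}$ such that $\dim\bigl(S/(I_C+\mathfrak{q})\bigr)=0$, i.e.\ $I_C+\mathfrak{q}$ is $\mathfrak{m}$-primary, equivalently $\sqrt{I_C+\mathfrak{q}}=\mathfrak{m}=(x_1,\dots,x_n,y_1,\dots,y_n)$. (A genuine s.o.p.\ for a $(n+1)$-dimensional ring would then be extracted as a maximal regular-enough subset, but the essential content is that these forms cut out the irrelevant ideal.)

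The plan is therefore to show $\sqrt{I_C+\mathfrak{q}}=\mathfrak{m}$, and the cleanest route is through the primary decomposition $I_C=\cap_{T}P_T(C)$. It suffices to prove that for every $T$ for which $P_T(C)$ is a minimal prime, the ideal $P_T(C)+\mathfrak{q}$ is $\mathfrak m$-primary; then $I_C+\mathfrak q$ is contained in, hence has radical equal to, $\mathfrak m$ after intersecting (more precisely $\sqrt{I_C+\mathfrak q}=\cap_T\sqrt{P_T(C)+\mathfrak q}$, and each factor will be $\mathfrak m$). First I would dispose of $T=\emptyset$: here $P_\emptyset(C)=J_{\tilde G}$ is generated by all $x_iy_j-x_jy_i$. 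Adding the linear forms $\mathfrak q$ lets me substitute $x_1=0$, $y_1=x_2$, $x_2=y_1$, $\dots$, propagating $y_i=x_{i+1}$ along the chain and $x_1=0$, $y_n=0$; feeding these into a few of the generators $x_iy_j-x_jy_i$ should force every variable to lie in the radical, giving $\mathfrak m$.

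For the nonempty minimal primes, recall from the excerpt that a minimal $P_T(C)$ with $T\neq\emptyset$ has $|T|>1$ with no two elements of $T$ adjacent on the cycle. Such a $T$ cuts the $n$-cycle into $c=|T|$ disjoint paths, and $P_T(C)=(\{x_i,y_i:i\in T\},\,J_{\tilde G_1},\dots,J_{\tilde G_c})$. The substitution from $\mathfrak q$ again forces the chain relations $y_i=x_{i+1}$ together with $x_1=0$ and $y_n=0$; combined with the vanishing $x_i=y_i=0$ for $i\in T$ (which breaks the chain into segments) and the complete-graph relations $J_{\tilde G_k}$ on each segment, I expect these to cascade so that all variables vanish modulo the radical on each component. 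The mechanism is that on each path-component the linear forms $y_j-x_{j+1}$ chain the $x$'s and $y$'s together, while the boundary conditions coming from the vertices in $T$ (and the endpoints $x_1,y_n$) pin the chain to $0$.

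The main obstacle I anticipate is the bookkeeping in this last step: verifying, uniformly in $T$, that the telescoping of the relations $y_i-x_{i+1}$ against the complete-graph binomials and the boundary vanishing really does collapse every component to the origin, rather than leaving a positive-dimensional locus on some path that happens to avoid a boundary constraint. I would handle this by arguing on a single path segment $G_k$ with vertex set a consecutive interval $\{a,a{+}1,\dots,b\}$: the forms $\{y_i-x_{i+1}\}$ identify $y_i$ with $x_{i+1}$, so a single variable (say $x_a$) parametrizes the whole segment via the $J_{\tilde G_k}$-relations $x_iy_j=x_jy_i$, and then at least one endpoint constraint ($x_1=0$, or $y_n=0$, or a neighbouring $x_i=y_i=0$ coming from $T$) forces that parameter to $0$. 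Once each segment is shown to reduce to $0$ in the radical, $P_T(C)+\mathfrak q$ is $\mathfrak m$-primary, completing the argument that $\mathfrak q$ together with $I_C$ is irrelevant; this establishes the system-of-parameters property since $\dim(S/I_C)=n+1$ and we have produced linear forms cutting the dimension to zero.
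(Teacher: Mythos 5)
Your opening premise is a miscount, and it matters. The sequence $x_1,\ y_1-x_2,\ y_2-x_3,\ \dots,\ y_{n-1}-x_n,\ y_n$ consists of $x_1$, the $n-1$ forms $y_i-x_{i+1}$ for $1\le i\le n-1$, and $y_n$: that is $n+1$ elements, not $2n$, and $n+1=\dim(S/I_C)$ exactly. So there is nothing to ``reconcile'': the statement is literally correct as written, and proving that $I_C+\mathfrak{q}$ is $\mathfrak{m}$-primary, where $\mathfrak{q}$ is generated by these forms, is precisely what is needed. Note that under your reading ($2n$ forms) your final sentence would be false --- $2n$ linear forms cutting the dimension to zero do not constitute a system of parameters of an $(n+1)$-dimensional ring, and ``extracting a subset'' would leave you with a different statement to prove --- so the miscount must be repaired for your conclusion to follow; fortunately the repair is just counting.

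With that fixed, your actual argument is sound and genuinely different from the paper's. You go through the primary decomposition $I_C=\cap_T P_T(C)$: since every prime containing $I_C+\mathfrak{q}$ contains some minimal $P_T(C)$, hence contains $P_T(C)+\mathfrak{q}$, it suffices to collapse each minimal prime separately, and your per-segment cascade (pin one end of each path by $x_1=0$, by $y_n=0$, or by $x_i=y_i=0$ for $i\in T$ together with the adjacent linear form, then telescope the relations $x_jx_{j+2}-x_{j+1}^2$ along the segment) does work --- including for the component that wraps around the edge $\{n,1\}$, a case your ``consecutive interval $\{a,\dots,b\}$'' description quietly skips: there the two half-chains are pinned separately, one by $x_1=0$ and one by the vertex of $T$ bounding the component, and each telescopes on its own. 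The paper avoids this case analysis entirely: it substitutes the linear forms directly into the generators of $I_C$ itself, obtaining $I=(x_1,\ x_1x_3-x_2^2,\ \dots,\ x_{n-2}x_n-x_{n-1}^2,\ x_n^2,\ x_2x_n)$ in $K[x_1,\dots,x_n]$, and a one-line induction ($x_k\in\Rad(I)$ and $x_kx_{k+2}-x_{k+1}^2\in I$ force $x_{k+1}\in\Rad(I)$) gives $\Rad(I)=(x_1,\dots,x_n)$. That is the same telescoping mechanism you propose, but run once on the defining generators rather than once per minimal prime; your route costs more bookkeeping and relies on the description of the minimal primes from \cite{herzog}, while the paper's is shorter and needs no decomposition at all.
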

\begin{proof}
Let $\underline{x}=x_{1},y_{1}-x_{2},\dots,y_{n-1}-x_{n},y_{n}$ , $M=S/I_{C}$
\ and $I=Ann(M/\underline{x}M).$ Now $M/\underline{x}M=S/(I_{C},\underline{x}%
) $ . If we replace $y_{1}=x_{2},y_{2}=x_{3},\dots,y_{n-1}=x_{n}$ in $%
I_{C}, $ we get

$$I=(x_{1},x_{1}x_{3}-x_{2}^{2},x_{2}x_{4}-x_{3}^{2},\dots,x_{n-2}x_{n}-x_{n-1}^{2},x_{n}^{2},x_{n}x_{2}).$$
\noindent Clearly $x_{1},x_{n}\in Rad(I),$ we need to prove that $%
x_{2},\dots,x_{n-1}\in Rad(I).$ If $x_{1},x_{1}x_{3}-x_{2}^{2}\in I$ then $%
x_{2}^{2}\in I$. It follows that $x_{2}\in Rad(I)$, hence we have a basis of
induction. If $x_{k}\in Rad(I)$ for $2\leq k\leq n-2$ then $%
x_{k}x_{k+2}-x_{k+1}^{2}\in I$ as $k+1\leq n-1$. Therefore $x_{k+1}^{2}\in
Rad(I)$ it follows that $x_{k+1}\in Rad(I).$ This then implies that $%
Rad(I)=(x_{1},x_{2},\dots,x_{n})$. Hence $I$ is $m-primary$ in the ring $%
K[x_{1},\dots,x_{n}]$ so $\underline{x}$ is the system of parameters of $%
S/I_{C}.$
\end{proof}

\begin{Lemma}Let $I_{L}$ be the binomial edge ideal of a
line $L$ of length $n$ , $g=x_{1}y_{n}-x_{n}y_{1}$ and $J_{\tilde{G}}$
be binomial edge ideal of a complete graph on $[n]$ then

\begin{description}
\item[(a)] If $I_{L}=\cap _{T\subseteq \lbrack n]}P_{T}(L)$ then $%
g\notin P_{T}(L)$ for $T\not=\emptyset.$

\item[(b)] $I_{L}=(I_{L}:g)\cap J_{\tilde{G}}.$

\item[(c)] $I_{L}:(I_{L}:g)=J_{\tilde{G}}.$

\item[(d)] $I_{L}:g=I_{L}:J_{\tilde{G}}.$
\end{description}
\end{Lemma}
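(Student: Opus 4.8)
The plan is to reduce all four statements to two facts about the minimal primary decomposition $I_L=\bigcap_T P_T(L)$: first, that $g$ lies in exactly one minimal prime, namely $P_\emptyset(L)=J_{\tilde G}$; and second, that $S/I_L$ is equidimensional, so that distinct minimal primes are pairwise incomparable. The second is free: the line's binomial edge ideal is a complete intersection, so $S/I_L$ is Cohen--Macaulay, hence unmixed, and all its minimal primes share the dimension $n+1$ (equivalently, one checks directly that $\height P_T(L)=n+|T|-c(T)=n-1$, since $c(T)=|T|+1$ for a set of non-adjacent internal vertices, matching $\height J_{\tilde G}=n-1$). Granting these, parts (b)--(d) are purely formal colon manipulations, and the content sits in part (a).

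For part (a) I would first pin down the nonempty $T$ giving a minimal prime. Using Lemma 1.1 I would rule out the endpoints: removing the leaf $1$ from $L_{[n]\setminus(T\setminus\{1\})}$ either isolates it (if $2\in T$), strictly increasing the component count, or leaves its component connected (if $2\notin T$), leaving the count unchanged; either way $c(T\setminus\{1\})<c(T)$ fails, so $1\notin T$, and symmetrically $n\notin T$. Hence every vertex of $T$ is internal, and since $T\neq\emptyset$ any $i\in T$ separates $1$ from $n$ on the path, so $1$ and $n$ lie in two distinct components of $L_{[n]\setminus T}$. Now $P_T(L)=(\{x_i,y_i:i\in T\})+J_{\tilde G_1}+\cdots+J_{\tilde G_c}$ is homogeneous and vanishes at the point $p$ obtained by setting every $T$-variable to $0$, every column $(x_v,y_v)$ in the block of the component of $n$ equal to $(0,1)$, and every other column equal to $(1,0)$ (so each determinantal block $J_{\tilde G_j}$, being rank-one, vanishes), while $g(p)=x_1y_n-x_ny_1=1\neq 0$. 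Thus $g\notin P_T(L)$. On the other hand $g=f_{1n}$ is one of the generating minors of $J_{\tilde G}=P_\emptyset(L)$, so $g\in P_\emptyset(L)$.

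With (a) in hand, set $Q=\bigcap_{T\neq\emptyset}P_T(L)$, so $I_L=Q\cap J_{\tilde G}$. Using $(\bigcap_T\mathfrak a_T):h=\bigcap_T(\mathfrak a_T:h)$ together with the rule that a prime $\mathfrak p$ satisfies $\mathfrak p:h=S$ if $h\in\mathfrak p$ and $\mathfrak p:h=\mathfrak p$ otherwise, part (a) gives $I_L:g=\bigcap_{T\neq\emptyset}P_T(L)=Q$; hence $(I_L:g)\cap J_{\tilde G}=Q\cap J_{\tilde G}=I_L$, which is (b). For (c), compute $I_L:(I_L:g)=I_L:Q=\bigcap_T(P_T(L):Q)$: each nonempty factor contains $Q$ and contributes $S$, while by incomparability no $P_T(L)$ with $T\neq\emptyset$ lies in $J_{\tilde G}$, so prime avoidance gives $Q\not\subseteq J_{\tilde G}$ and the $T=\emptyset$ factor contributes $P_\emptyset:Q=J_{\tilde G}$; thus $I_L:(I_L:g)=J_{\tilde G}$. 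For (d), the analogous computation with colon by the ideal $J_{\tilde G}$ yields $P_\emptyset:J_{\tilde G}=S$ and, using $J_{\tilde G}\not\subseteq P_T(L)$ for $T\neq\emptyset$, $P_T(L):J_{\tilde G}=P_T(L)$, so $I_L:J_{\tilde G}=Q=I_L:g$.

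I expect the only genuine obstacle to be part (a): the bookkeeping that excludes the endpoints from any minimal $T$, and the explicit point certifying $g\notin P_T(L)$. Everything afterward is formal, provided the equidimensionality of $S/I_L$ is recorded first so that the minimal primes are pairwise incomparable, which is exactly what drives the non-containments used in (c) and (d).
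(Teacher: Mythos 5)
Your proof is correct and follows essentially the same route as the paper's: both reduce (b)--(d) to formal colon manipulations with the minimal primes $P_T(L)$ of the decomposition $I_L=\bigcap_T P_T(L)$, with part (a) carrying all the content. If anything, your part (a) is more careful than the paper's, which quotes the characterization of minimal primes from Herzog et al.\ and then simply asserts $g\notin P_T(L)$; you re-derive the endpoint exclusion directly from Lemma 1.1 and certify non-membership by an explicit point at which $P_T(L)$ vanishes but $g$ does not, which closes the small gap in arguing that $g$ avoids the \emph{sum} of the blocks and not merely each block separately.
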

\begin{proof}\begin{description}
 \item[(a)] Because of $I_{L}=\cap _{T\subseteq \lbrack n]}P_{T}(L),$ it is known from \cite%
{herzog} \ that $P_{T}(L)$ is minimal prime of $I_{L}$ if either (i) $T=\emptyset $
\ or (ii) if $T\neq \emptyset $ \ and $1,n\notin T$ and if $\ |T|>1$ then there are no
two elements $i,j\in T$\ such that $\{i,j\}$ is an edge of $L$. If $T=\emptyset $, then $P_{T}(L)=J_{\tilde{G}}$ is the ideal of complete graph. Now let $T\neq \emptyset $ and $1,n\notin T$. Suppose that $\ |T|>1$ then $g\notin P_{T}(L)$ because $x_{1},y_{1},x_{n},y_{n}$ does not belongs to $\cup _{i\in T}\{x_{i},y_{i}\}$ and $g$ does not belongs to any $J_{\tilde{G}}$ for any connected component of $[n]\backslash{T}$
\item[(b)] $I_{L}:g=\cap _{g\notin P_{T}(L)}P_{T}(L),$ Using $(a)$ we
have
$$I_{L}:g=\cap _{T\neq \emptyset }P_{T}(L)$$

and $I_{L}=\cap _{T\neq \emptyset }P_{T}(L)\cap P_{\emptyset }(L)$ therefore,%
$$I_{L}=(I_{L}:g)\cap J_{\tilde{G}}.$$
\item[(c)] Using $(b)\ $we have $$I_{L}:(I_{L}:g)=((I_{L}:g)\cap J_{\tilde{G}%
}):(I_{L}:g)=J_{\tilde{G}}:(I_{L}:g).$$
In order to finish we have to prove that $J_{\tilde{G}}=J_{%
\tilde{G}}:(I_{L}:g).$
From the definition $J_{\tilde{G}}\subseteq J_{%
\tilde{G}}:(I_{L}:g).$ Now for the other inclusion let $%
I_{L}:g=(h_{1},h_{2},...,h_{r}),$ then $J_{\tilde{G}}:(I_{L}:g)=\cap
_{i=1}^{r}J_{\tilde{G}}:h_{i}.$ Now $J_{\tilde{G}}:h_{i}=J_{\tilde{G}}$ for
at least one $i,$ since $J_{\tilde{G}}$ is a prime ideal and $h_{i}\notin $ $J_{%
\tilde{G}}$ for at least one $i$ so $J_{\tilde{G}}:(I_{L}:g)\subseteq J_{%
\tilde{G}}$ and we are done.

\item[(d)] $I_{L}:g\subseteq I_{L}:J_{\tilde{G}}$ is trivial. For
another inclusion let $f\in I_{L}:J_{\tilde{G}}$ then $fJ_{\tilde{G}%
}\subseteq I_{L}\subseteq P_{T}(L),$ now $J_{\tilde{G}}\nsubseteq P_{T}(L)$
for $T\neq \emptyset ,$ so $f\in P_{T}(L)$ for $T\neq \emptyset $ which
implies $f\in I_{L}:g.$
\end{description}
\end{proof}

\begin{Definition}\cite{peskine}
Two ideals $I$ and $J$ of height $g$ in $S$ are said to be linked
if there is a regular sequence $\alpha $ of height $g$ in their intersection such that $%
I=\alpha :J$ and $J=\alpha :I.$
\end{Definition}
It is also known from \cite{peskine} that $I$ and $J$ are two linked ideals of $S$ then $S/I$ is Cohen-Macaulay if and only if $S/J$ is Cohen-Macaulay.
\begin{Lemma}Let $I_{L}$ be the binomial edge ideal
of a line of length $n$ then $S/I_{L}:g$ is
Cohen-Macaulay of dimension $n+1.$
\end{Lemma}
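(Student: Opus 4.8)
The plan is to exploit the linkage machinery just introduced. By Lemma 3.3(c), we have $I_L : (I_L : g) = J_{\tilde G}$, and by part (b) we have $I_L = (I_L : g) \cap J_{\tilde G}$. This is exactly the setup of Definition 3.4: I would aim to show that $I_L : g$ and $J_{\tilde G}$ are \emph{linked} ideals, and then transfer the Cohen-Macaulay property from $J_{\tilde G}$ (which is known to be Cohen-Macaulay, being the binomial edge ideal of a complete graph) to $I_L : g$ via the result of \cite{peskine} quoted immediately before the statement.

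Concretely, the first step is to verify that the two ideals $I_L : g$ and $J_{\tilde G}$ have the same height $h$, say $h = n-1$ (the height of the binomial edge ideal of the complete graph on $[n]$). Second, I would produce a regular sequence $\alpha$ of length $h$ contained in $(I_L : g) \cap J_{\tilde G} = I_L$, and check the two linkage equalities $I_L : g = \alpha : J_{\tilde G}$ and $J_{\tilde G} = \alpha : (I_L : g)$. The second of these is essentially Lemma 3.3(c); the first is its symmetric counterpart, which I expect follows by combining parts (b) and (d) of Lemma 3.3 with a computation of $(I_L : g) : J_{\tilde G}$ along the same lines as the proof of (c). Once linkage is established, the cited theorem gives that $S/(I_L : g)$ is Cohen-Macaulay precisely because $S/J_{\tilde G}$ is Cohen-Macaulay.

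For the dimension claim, I would argue that $\dim S/(I_L : g) = n+1$. Since $I_L : g = \cap_{T \neq \emptyset} P_T(L)$ by Lemma 3.3(b), the dimension is the maximum of $\dim S/P_T(L)$ over nonempty $T$; using the height formula $\height P_T(L) = n + |T| - c(T)$ from the preliminaries, I would check that the minimal such prime (coming from an appropriate singleton or well-separated $T$) yields dimension $n+1$. Alternatively, since linked Cohen-Macaulay ideals are equidimensional of the same height, $\dim S/(I_L:g) = \dim S/J_{\tilde G} = 2n - (n-1) = n+1$, which is the cleaner route and avoids re-examining the primary decomposition.

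The main obstacle I anticipate is the linkage hypothesis itself: one must exhibit a genuine \emph{regular sequence} $\alpha$ of the correct length inside $I_L$, not merely an ideal generated in the intersection. Verifying that a chosen sequence of binomials is regular of height $n-1$ (equivalently, that $S/\alpha$ is Cohen-Macaulay of the right dimension, so that $\alpha$ is a complete intersection) is the delicate point, since it requires control over the depth of $S/\alpha$. Here I would lean on the fact that $I_L$ is itself a complete intersection (the binomial edge ideal of a line, as noted in Section 2), so that its $n-1$ generators $f_{12}, \dots, f_{n-1,n}$ already form a regular sequence; taking $\alpha$ to be this sequence should make both linkage identities and the height matching fall into place.
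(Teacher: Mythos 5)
Your proposal is correct and follows essentially the same route as the paper: take the regular sequence $\alpha$ to be the $n-1$ generators of the complete intersection $I_{L}$, observe that parts (b), (c), (d) of the preceding lemma are precisely the two linkage equalities $\alpha : (I_{L}:g) = J_{\tilde{G}}$ and $\alpha : J_{\tilde{G}} = I_{L}:g$, and then invoke the Peskine--Szpiro theorem to transfer Cohen--Macaulayness from $S/J_{\tilde{G}}$ to $S/(I_{L}:g)$, with the dimension $n+1$ coming from the common height $n-1$. The only difference is that what you flag as the ``delicate point'' (exhibiting the regular sequence, and the symmetric colon identity) is immediate: $I_{L}$ being a complete intersection hands you $\alpha$ for free, and the identity $\alpha : J_{\tilde{G}} = I_{L}:g$ is literally part (d), so no extra computation of $(I_{L}:g):J_{\tilde{G}}$ is needed.
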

\begin{proof}$I_{L}$ is a complete intersection and using $(b),(c)$ and $(d)$ of Lemma
3.2 we have $I_{L}:g$ and $J_{\tilde{G}}$ are linked ideals. Now it follows
from above theorem  that $S/I_{L}:g$ is Cohen-Macaulay because $S/J_{\tilde{G}}$ is Cohen-Macaulay.
\end{proof}

\begin{Theorem}Any cycle of length $n\geq 3$ is approximately Cohen-Macaulay.
\end{Theorem}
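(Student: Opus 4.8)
The plan is to verify criterion (ii) of Theorem 1.6 for the ideal $I_C$, where $d=\dim(S/I_C)=n+1$. Thus I must check two things: that $S/U_S(I_C)$ is a Cohen-Macaulay ring of dimension $n+1$, and that $\depth(S/I_C)\geq n$. The first is immediate from the facts recorded just before Theorem 3.1: one has $U_S(I_C)=P_{\emptyset}(C)=J_{\tilde{G}}$, and the binomial edge ideal $J_{\tilde{G}}$ of the complete graph is Cohen-Macaulay of dimension $n+1$ by \cite{herzog},\cite{ene}. Hence the whole problem reduces to the single depth estimate $\depth(S/I_C)\geq d-1=n$.

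To control the depth I would present the cycle as a line with its closing edge added. Labelling the vertices so that deleting the edge $\{1,n\}$ leaves the line $L$ of length $n$, I have $I_C=(I_L,g)$ with $g=x_1y_n-x_ny_1$, which is exactly the binomial of Lemma 3.2 and Lemma 3.5. Multiplication by $g$ on $S/I_L$ has kernel $(I_L:g)/I_L$ and cokernel $S/(I_L,g)=S/I_C$, so it produces the short exact sequence
$$0\rightarrow \bigl(S/(I_L:g)\bigr)(-2)\mathop\rightarrow\limits^{g}S/I_L\rightarrow S/I_C\rightarrow 0.$$
Here $S/I_L$ is a complete intersection, hence Cohen-Macaulay of dimension $n+1$, while $S/(I_L:g)$ is Cohen-Macaulay of dimension $n+1$ by Lemma 3.5. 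Note that $g$ is genuinely a zerodivisor on $S/I_L$ (by Lemma 3.2(b) one has $I_L:g\supsetneq I_L$), which is precisely why the left-hand module is $S/(I_L:g)$ rather than $S/I_L$; this is the point where the linkage result of Lemma 3.5 becomes essential.

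Applying the depth lemma to this sequence, with both the sub and the middle module of depth $n+1$ and with degree shifts not affecting depth, gives $\depth(S/I_C)\geq\min\{\depth(S/I_L),\depth(S/(I_L:g))-1\}=\min\{n+1,n\}=n$, which is exactly the bound $\depth(S/I_C)\geq d-1$ that I needed. Together with the first paragraph, both hypotheses of Theorem 1.6(ii) are met, so $S/I_C$ is approximately Cohen-Macaulay. The main obstacle is this depth bound, and the only real input it rests on is Lemma 3.5; everything else is formal homological bookkeeping. It is worth noting that $S/I_C$ is not itself Cohen-Macaulay, since the minimal primes $P_T(C)$ with $T\neq\emptyset$ (for instance those coming from a pair of non-adjacent vertices) have dimension $n<n+1$, so $S/I_C$ fails to be unmixed and its depth is in fact exactly $n$, confirming the approximate rather than exact Cohen-Macaulay behaviour.
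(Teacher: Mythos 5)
Your proposal is correct and is essentially the paper's own proof: the same short exact sequence $0\to \bigl(S/(I_L:g)\bigr)(-2)\to S/I_L\to S/I_C\to 0$, the same depth-lemma estimate $\depth(S/I_C)\geq n$ resting on the Cohen--Macaulayness of $S/(I_L:g)$ from the linkage lemma (Lemma 3.4 in the paper, which you miscite as Lemma 3.5), and the same appeal to Theorem 1.6(ii) via $U_S(I_C)=J_{\tilde{G}}$. The only inaccuracy is in your optional closing aside: for $n=3$ the cycle is the complete graph, there are no non-adjacent vertex pairs, $I_C=J_{\tilde{G}}$ is prime, and $S/I_C$ is actually Cohen--Macaulay, so the claim that $S/I_C$ always fails to be unmixed with depth exactly $n$ is false in that case; this remark plays no role in the argument, which stands as written.
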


\begin{proof} First we will compute the depth of  $S/I_{C}$ for $n\geq 3$.
From above notations $I_{C}=(I_{L},g).$ Consider the exact sequence

$$0\rightarrow S/I_{L}:g(-2) \rightarrow S/I_{L}\rightarrow S/I_{C}\rightarrow 0.$$

Now it follows from the Depth's Lemma that
$$depth(S/I_{C})\geq \min \{depth(S/I_{L}:g)-1,depth(S/I_{L})\}=n.$$
 Hence $depth(S/I_{C})\geq n.$

 Now $S/U_{S}(I_{C})\cong S/J_{\tilde{G}}$, which is $n+1-$dimensional Cohen-Macaulay ring, so from Theorem 1.6.  $S/I_{C}$ is approximately Cohen-Macaulay.
\end{proof}

Furthermore we will find the Hilbert series of $S/I_{C}$. For this we have to introduce a monomial ideal $M=(x_{2}x_{3}\cdots x_{n-1},x_{2}x_{3}\cdots x_{n-2}y_{n-1},\dots,x_{2}y_{3}\cdots y_{n-1},y_{2}y_{3}\cdots y_{n-1}).$ We need also the expression for the Hilbert series of $S/J_{\tilde{G}}$. Recall the result in \cite{burns}, if $R$ is Cohen-Macaulay ring with dimension $d$ \ and \ $\underline{x}=x_{1},\dots,x_{d}$ be the homogenous
system of parameters of degree 1 then
$$H(R,t)=\frac{H(R/\underline{x}R,t)}{(1-t)^{d}}.$$
Now in our case $S/J_{\tilde{G}}$ is Cohen-Macaulay ring with dimension $n+1$,
$\underline{x}= x_{1},y_{1}-x_{2},\dots,y_{n-1}-x_{n},y_{n}$ is system of
parameter of degree 1 of $S/J_{\tilde{G}}$ and $$S/(\underline{x}S,J_{\tilde{G%
}})\cong K[y_{1},\dots,y_{n-1}]/(y_{1},\dots,y_{n-1})^{2}$$ therefore $H(S/(%
\underline{x}S,J_{\tilde{G}}),t)=1+(n-1)t$ and we have
$$H(S/J_{\tilde{G}},t)=\frac{1+(n-1)t}{(1-t)^{n+1}}.$$

\begin{Lemma}With the notations above we have
$$\omega (S/J_{\tilde{G}})\cong (J_{\tilde{G}},M)/J_{%
\tilde{G}}.$$
\end{Lemma}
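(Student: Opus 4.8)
The plan is to compute the canonical module $\omega(S/J_{\tilde G})$ explicitly by exploiting the fact that $S/J_{\tilde G}$ is Cohen-Macaulay of dimension $n+1$, using the system of parameters $\underline{x}$ from Theorem 3.1 to reduce to an Artinian quotient whose socle structure is transparent. First I would recall the general principle: for a Cohen-Macaulay graded ring $R$ of dimension $d$ with a linear system of parameters $\underline{x}$, the canonical module satisfies $\omega(R)/\underline{x}\,\omega(R) \cong \omega(R/\underline{x}R)$, and for the Artinian Gorenstein-type reduction the canonical module of $R/\underline{x}R$ is computable from its socle. Here the reduction is $A := S/(\underline{x}S, J_{\tilde G}) \cong K[y_1,\dots,y_{n-1}]/(y_1,\dots,y_{n-1})^2$, whose Hilbert function is $1 + (n-1)t$ and whose canonical module I would identify via graded local duality.

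Next I would compute $\omega(A)$ for this specific Artinian algebra $A = K[z_1,\dots,z_{n-1}]/(z_1,\dots,z_{n-1})^2$. Since $A$ has socle in degree $1$ of dimension $n-1$ and is concentrated in degrees $0$ and $1$, graded local duality gives $\omega(A) = \operatorname{Hom}_K(A,K)$ with the appropriate grading shift, so $\omega(A)$ has Hilbert function $(n-1) + t$, i.e.\ it is generated in the lowest degree by $n-1$ elements dual to the degree-$1$ part. I would then lift this description back: the claim is that $\omega(S/J_{\tilde G})$ is isomorphic to the module $(J_{\tilde G},M)/J_{\tilde G}$, where $M$ is the monomial ideal introduced just before the lemma, generated by the $n-2$ squarefree monomials $x_2\cdots x_{n-1}, x_2\cdots x_{n-2}y_{n-1}, \dots, y_2\cdots y_{n-1}$ obtained by replacing each $x$ by $y$ successively. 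The count of these generators and their degrees should be matched against the Hilbert-function data of $\omega(A)$ after accounting for the parameter reduction.

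The decisive step is to verify that $(J_{\tilde G},M)/J_{\tilde G}$ really is the canonical module, and not merely a module with the right numerical invariants. For this I would exhibit an explicit $S/J_{\tilde G}$-module isomorphism, most cleanly by showing that $(J_{\tilde G},M)/J_{\tilde G}$ is a maximal Cohen-Macaulay $S/J_{\tilde G}$-module of rank one that agrees with $\omega(S/J_{\tilde G})$ after reduction modulo $\underline{x}$. Concretely, I would check that the images of the generators of $M$ in $(S/J_{\tilde G})/\underline{x}(S/J_{\tilde G})$ span exactly the dual socle computed above, using the substitution $y_i = x_{i+1}$ (with $x_1 = y_n = 0$) dictated by $\underline{x}$; under this substitution the monomial $x_2 \cdots x_{n-1}$ and its $y$-variants should map onto a basis of the relevant graded piece. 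Because $\omega(S/J_{\tilde G})$ is itself maximal Cohen-Macaulay and the reduction mod a regular sequence is faithfully detected on the Artinian level, matching the two modules after reduction forces the isomorphism upstairs by Nakayama.

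The main obstacle I anticipate is the last point: proving that the isomorphism at the Artinian level lifts to an isomorphism of $S/J_{\tilde G}$-modules rather than just an equality of Hilbert series. This requires knowing that $(J_{\tilde G},M)/J_{\tilde G}$ is genuinely maximal Cohen-Macaulay over $S/J_{\tilde G}$ (so that $\underline{x}$ is a regular sequence on it), which in turn needs a direct analysis of the annihilator and depth of this module; verifying that the monomials generating $M$ are nonzerodivisors-compatible with the binomial relations of $J_{\tilde G}$ is the technical heart. I would handle this by using the primary decomposition of $J_{\tilde G}$ (it is prime, being the ideal of the complete graph) and the link between $J_{\tilde G}$ and $I_L : g$ established in Lemma 3.4, which already packages the Cohen-Macaulayness we need.
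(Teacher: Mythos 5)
Your proposal has a genuine gap at its decisive step. You write that ``matching the two modules after reduction forces the isomorphism upstairs by Nakayama,'' but this is precisely what fails: an isomorphism $N/\underline{x}N \cong \omega/\underline{x}\omega$ between the reductions of two maximal Cohen--Macaulay modules does \emph{not} imply $N \cong \omega$. Nakayama's lemma lifts surjectivity of an \emph{existing} map whose reduction is surjective, but your plan never constructs any map $(J_{\tilde G},M)/J_{\tilde G} \to \omega(S/J_{\tilde G})$ to which it could be applied, and such a map cannot in general be lifted through the surjection $\omega \to \omega/\underline{x}\omega$ without projectivity or an Ext-vanishing argument; moreover non-isomorphic maximal Cohen--Macaulay modules can have isomorphic reductions modulo a system of parameters, so ``the Artinian level'' does not faithfully detect isomorphism. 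To rescue your strategy you would need a genuinely different ingredient, e.g.\ the characterization (in Bruns--Herzog) of $\omega$ as the unique \emph{faithful} maximal Cohen--Macaulay module of type $1$: showing your module is MCM with one-dimensional socle after reduction, and faithful (automatic for a nonzero ideal of the domain $S/J_{\tilde G}$), would then suffice --- but you never invoke this, and your stated justification is fallacious as it stands. A second, subtler problem is circularity: you propose to get the needed Cohen--Macaulayness from ``the link between $J_{\tilde G}$ and $I_L:g$,'' but the identification of $M$ with $I_L:g$ is Lemma 3.8 of the paper, which is \emph{deduced from} the present lemma; the linkage result (Lemma 3.4) by itself says nothing about the monomial ideal $M$. (Also, minor but symptomatic: $M$ has $n-1$ generators of degree $n-2$, not $n-2$ generators.)

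The paper's proof avoids reduction and lifting entirely. It identifies $S/J_{\tilde G}$, the ring of $2$-minors of a generic $2\times n$ matrix, with the toric ring $K[x_1,\dots,x_n,x_1t,\dots,x_nt]$, quotes from Bruns--Herzog the known canonical module $\omega(S/J_{\tilde G}) \cong (x_1,y_1)^{n-2}S/J_{\tilde G}$ of this determinantal ring, and rewrites both this ideal and $MS/J_{\tilde G}$ in the toric coordinates: the first becomes $x_1^{n-2}(1,t,\dots,t^{n-2})$ and the second $x_2\cdots x_{n-1}(1,t,\dots,t^{n-2})$. Since the ring is a domain, multiplying each ideal by a suitable nonzero element makes them literally equal, hence they are isomorphic as modules (the classical fractional-ideal argument). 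This explicit identification is exactly the content your outline is missing: it produces the isomorphism directly instead of trying to descend it from an Artinian quotient.
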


\begin{proof}
$J_{\tilde{G}}$ is the ideal of all 2-minors of a generic $2\times n-$%
matrix which implies all 2-minors of a generic $2\times n-$matrix are zero
in $S/J_{\tilde{G}}$ hence both rows of this matrix are linearly dependent,
therefore $S/J_{\tilde{G}}\cong K[x_{1},\dots,x_{n},x_{1}t,\dots,x_{n}t].$ It is
known that \cite{burns}
$$\omega (S/J_{\tilde{G}})\cong (x_{1},y_{1})^{n-2}S/J_{\tilde{G}}$$

therefore
$$\omega (S/J_{\tilde{G}})\cong
(x_{1},x_{1}t)^{n-2}K[x_{1},\dots,x_{n},x_{1}t,\dots,x_{n}t].$$

Next we consider the monomial ideal $M$ in $S/J_{\tilde{G}}$
$$MS/J_{\tilde{G}}\cong
(x_{2}x_{3}\cdots x_{n-1},x_{2}x_{3}\cdots x_{n-1}t,\dots,x_{2}x_{3}\cdots x_{n-1}t^{n-2})K[x_{1},\dots,x_{n},x_{1}t,\dots,x_{n}t].$$

Now after multiplying $\omega (S/J_{\tilde{G}})$ by $x_{2}x_{3}\cdots x_{n-1}$
and $MS/J_{\tilde{G}}$ by $x_{1}$ respectively$,$ we see that both are
isomorphic that is
$$\omega (S/J_{\tilde{G}})\cong MS/J_{\tilde{G}}\cong (J_{\tilde{G}},M)/J_{%
\tilde{G}}.$$
\end{proof}

\begin{Theorem}$\ S/(J_{\tilde{G}},M)$ is Gorenstein of dimension $n$.
\end{Theorem}

\begin{proof}From Lemma 1.3. and 3.6. We have $$\ (S/J_{\tilde{G}})/MS/J_{\tilde{G}}
\cong S/(J_{\tilde{G}},M).$$ which is Gorenstein of dimension $n$.
\end{proof}

\begin{Lemma}With the notations above we have
$$I_{L}:g=(I_{L},M)\text{ \ } and \text{ \ }hence\text{ \ } (I_{L}:g,J_{\tilde{G}})=(M,J_{\tilde{G}}).$$
\end{Lemma}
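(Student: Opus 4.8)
The plan is to prove the displayed equality $I_L:g=(I_L,M)$ by establishing the two inclusions separately; the ``hence'' clause then follows at once. Indeed, every generator $f_{i,i+1}$ of $I_L$ is among the generators $f_{ij}$ of $J_{\tilde{G}}$, so $I_L\subseteq J_{\tilde{G}}$, and granting the equality we get $(I_L:g,J_{\tilde{G}})=(I_L,M,J_{\tilde{G}})=(M,J_{\tilde{G}})$.

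For the inclusion $(I_L,M)\subseteq I_L:g$ it suffices, since $I_L\subseteq I_L:g$ trivially, to check that $g\cdot u\in I_L$ for each of the monomial generators $u=u_p:=x_2\cdots x_p\,y_{p+1}\cdots y_{n-1}$ of $M$ (with the $x$-block empty when $p=1$ and the $y$-block empty when $p=n-1$). Writing $f_{ij}=x_iy_j-x_jy_i$, I would use the elementary identities
\[
x_k f_{ij}=x_i f_{kj}+x_j f_{ik},\qquad y_k f_{ij}=y_i f_{kj}+y_j f_{ik}\qquad(i<k<j),
\]
which hold in $S$. Since $f_{k,k+1}\in I_L$, these let me ``slide'' the outer indices of $g=f_{1n}$ inward modulo $I_L$: iterating the first identity gives $x_2\cdots x_p\,f_{1n}\equiv x_1\cdots x_{p-1}\,f_{pn}\pmod{I_L}$, and iterating the second gives $y_{p+1}\cdots y_{n-1}\,f_{pn}\equiv y_{p+2}\cdots y_{n-1}y_n\,f_{p,p+1}\equiv 0\pmod{I_L}$. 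Multiplying these, $g\cdot u_p\equiv 0\pmod{I_L}$, so $u_p\in I_L:g$; the two extreme generators use only one of the two slides. Hence $(I_L,M)\subseteq I_L:g$.

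The reverse inclusion $I_L:g\subseteq(I_L,M)$ is the heart of the matter, and I would obtain it from a Hilbert-function comparison carried out in $S/I_L$. Put $\bar M:=(I_L,M)/I_L$ and note $(I_L:g)/I_L=\operatorname{Ann}_{S/I_L}(\bar g)$; by the first part $\bar M\subseteq(I_L:g)/I_L$, so it is enough to show these two graded $S/I_L$-modules have the same Hilbert function. By Lemma 3.2(b),(c),(d) the ideals $I_L:g$ and $J_{\tilde{G}}$ are linked through the complete intersection $I_L$, and since $I_L$ is generated by $n-1$ quadrics in $2n$ variables one has $\omega(S/I_L)\cong (S/I_L)(-2)$; standard liaison (see \cite{peskine}, \cite{burns}) then yields a graded isomorphism $(I_L:g)/I_L\cong\omega(S/J_{\tilde{G}})(2)$, whence $H\big((I_L:g)/I_L,t\big)=t^{-2}H\big(\omega(S/J_{\tilde{G}}),t\big)$. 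On the other hand Lemma 3.6 identifies $\omega(S/J_{\tilde{G}})$ with $M S/J_{\tilde{G}}$, and comparing the two expressions gives $H\big((I_L:g)/I_L,t\big)=H\big(M S/J_{\tilde{G}},t\big)$.

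Finally, because $I_L\subseteq J_{\tilde{G}}$ there is a natural surjection $\bar M\cong M/(M\cap I_L)\twoheadrightarrow M/(M\cap J_{\tilde{G}})\cong M S/J_{\tilde{G}}$, so $H(\bar M,t)\ge H\big(M S/J_{\tilde{G}},t\big)$ coefficientwise, while $\bar M\subseteq(I_L:g)/I_L$ gives $H(\bar M,t)\le H\big((I_L:g)/I_L,t\big)$. As the two outer Hilbert series coincide, the squeeze forces $H(\bar M,t)=H\big((I_L:g)/I_L,t\big)$, and together with $\bar M\subseteq(I_L:g)/I_L$ this yields $\bar M=(I_L:g)/I_L$, i.e. $(I_L,M)=I_L:g$. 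The step I expect to demand the most care is this reverse inclusion, and within it the graded bookkeeping of the liaison isomorphism: one must track the degree twist precisely — equivalently, verify that the $a$-invariant of $S/I_L$ places the bottom of $\omega(S/J_{\tilde{G}})$ in degree $n$ — so that the two Hilbert series match exactly rather than merely up to a shift.
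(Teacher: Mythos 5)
Your proof is correct, and its closing step takes a genuinely different route from the paper's. The first inclusion $(I_L,M)\subseteq I_L:g$ is the same argument as in the paper (your syzygy identities $x_kf_{ij}=x_if_{kj}+x_jf_{ik}$, $y_kf_{ij}=y_if_{kj}+y_jf_{ik}$ are a clean packaging of the paper's step-by-step replacements $x_iy_{i+1}\equiv x_{i+1}y_i \bmod I_L$), and both proofs then rest on the same two identifications: $(I_L:g)/I_L\cong\omega(S/J_{\tilde{G}})$ via Gorenstein linkage through $I_L$ combined with Lemma 3.2(d), and $\omega(S/J_{\tilde{G}})\cong (J_{\tilde{G}},M)/J_{\tilde{G}}$ from Lemma 3.6. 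The divergence is in how equality is extracted from these. The paper defines the explicit map $\phi:(J_{\tilde{G}},M)/J_{\tilde{G}}\to (I_L:g)/I_L$, $\sum r_im_i+J_{\tilde{G}}\mapsto\sum r_im_i+I_L$ (well-defined exactly because of the first inclusion and Lemma 3.2(b)), views it as an endomorphism of $\omega(S/J_{\tilde{G}})$, and uses $\Hom(\omega,\omega)\cong S/J_{\tilde{G}}$ (Lemma 1.3(1)) to conclude that a nonzero degree-zero endomorphism is multiplication by a nonzero scalar, hence an isomorphism, so its image $(I_L,M)/I_L$ exhausts $(I_L:g)/I_L$. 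You instead run a Hilbert-function squeeze: both outer modules $(I_L:g)/I_L$ and $MS/J_{\tilde{G}}$ have Hilbert series $t^{-2}H(\omega(S/J_{\tilde{G}}),t)$, and $\bar M=(I_L,M)/I_L$ is pinched between them by the inclusion $\bar M\subseteq (I_L:g)/I_L$ and the surjection $\bar M\twoheadrightarrow MS/J_{\tilde{G}}$ (a map the paper never uses). Your route buys independence from the endomorphism-ring fact and from the well-definedness check for $\phi$, at the price of the degree bookkeeping you flag: both twists must be pinned to exactly $2$ --- on the linkage side via $\omega(S/I_L)\cong (S/I_L)(-2)$, and on the Lemma 3.6 side by comparing the initial degree $n-2$ of $MS/J_{\tilde{G}}$ with the initial degree $n$ of $\omega(S/J_{\tilde{G}})$ read off from $H(\omega(S/J_{\tilde{G}}),t)=(-1)^{n+1}H(S/J_{\tilde{G}},1/t)$ --- which is precisely the computation the paper defers to Lemma 3.9; note the paper's argument secretly needs the same matching so that $\phi$ has degree zero, so neither route escapes it. Your argument is also not circular: the paper's Lemma 3.9 invokes Lemma 3.8 only for the identity $(I_L:g,J_{\tilde{G}})=(M,J_{\tilde{G}})$, not for the Hilbert series facts about $\omega(S/J_{\tilde{G}})$ and $(M,J_{\tilde{G}})/J_{\tilde{G}}$ that your squeeze requires.
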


\begin{proof}
First we will show that $M\subseteq I_{L}:g$. Because $I_{L}\subseteq I_{L}:g$ it will be enough to prove that $$(I_{L},M)/I_{L}\subseteq I_{L}:g/I_{L}$$ in $S/I_{L}$. That is, we have always $$x_{i}y_{i+1}\equiv x_{i+1}y_{i} \text{ mod }I_{L}$$ for $i=1,\dots,n-1$. Now let $x_{2}x_{3}\cdots x_{n-1}\in M$. With $g=x_{1}y_{n}-x_{n}y_{1}$, we get
$$x_{2}x_{3}\cdots x_{n-1}g\equiv x_{1}\cdots x_{n-1}y_{n}-x_{2}\cdots x_{n-1}x_{n}y_{1} \text{ mod }I_{L}.$$
Now put $x_{n-1}y_{n}\equiv x_{n}y_{n-1}\text{ mod }I_{L}$ and $x_{n-2}y_{n-1}\equiv x_{n-1}y_{n-2}\text{ mod }I_{L}$ and so on. After $(n-1)$ steps it follows that $$x_{2}x_{3}\cdots x_{n-1}g\equiv 0\text{ mod }I_{L}.$$ This proves that $x_{2}x_{3}\cdots x_{n-1}\in I_{L}:g.$ Similarly $\ y_{2}y_{3}\cdots y_{n-1}\in I_{L}:g.$
Now take any arbitrary element $\ x_{2}\cdots x_{i}y_{i+1}\cdots y_{n-1}\in M$ \ then
$$x_{2}\cdots x_{i}y_{i+1}\cdots y_{n-1}g\equiv y_{1}x_{2}\cdots x_{i}y_{i+1}\cdots y_{n-1}x_{n}-x_{1}\cdots x_{i}y_{i+1}\cdots y_{n}\text{ mod }I_{L}.$$
We have  $x_{i}y_{i+1}\equiv x_{i+1}y_{i}\text{ mod }I_{L}.$ Therefore
$$x_{2}\cdots x_{i}y_{i+1}\cdots y_{n-1}g\equiv y_{1}x_{2}\cdots x_{i}y_{i+1}\cdots y_{n-1}x_{n}-x_{1}\cdots x_{i-2}(x_{i-1}y_{i})(x_{i+1}y_{i+2})y_{i+3}\cdots y_{n}\text{ mod }I_{L}.$$
Now replace $x_{i-1}y_{i}=x_{i}y_{i-1}$ $and$ $x_{i+1}y_{i+2}=x_{i+2}y_{i+1}$ in $S/I_{L}$, we have
$$x_{2}\cdots x_{i}y_{i+1}\cdots y_{n-1}g\equiv y_{1}x_{2}\cdots x_{i}y_{i+1}\cdots y_{n-1}x_{n}-x_{1}\cdots (x_{i-2}y_{i-1})x_{i}y_{i+1}(x_{i+2}y_{i+3})\cdots y_{n}\text{ mod }I_{L}.$$
If we continue such replacements, we get that $$x_{2}\cdots x_{i}y_{i+1}\cdots y_{n-1}g\equiv 0 \text{ mod }I_{L}.$$

So $M\subseteq I_{L}:g$ hence
$$(M,J_{\tilde{G}})\subseteq (I_{L}:g,J_{\tilde{G}}).$$

$S/I_{L}$ is a Gorenstein ring and $S/I_{L}\twoheadrightarrow S/J_{%
\tilde{G}},$ therefore from Lemma 1.3
$$\omega (S/J_{\tilde{G}})\cong
Hom_{S/I_{L}}(S/J_{\tilde{G}},S/I_{L})\cong (I_{L}:J_{\tilde{G}})/I_{L}.$$
Using Lemma 3.2.(d) we get
$$\omega (S/J_{\tilde{G}})\cong (I_{L}:g)/I_{L}.$$

Now from Lemma 3.6 we have that $$\ \omega (S/J_{\tilde{G}})\cong (J_{\tilde{G}%
},M)/J_{\tilde{G}}.$$
So there are two expressions for $\omega (S/J_{\tilde{G}%
}).$ As the canonical module is unique up to isomorphism. We want to describe an isomorphism. To this end define a map $$\phi : \
(J_{\tilde{G}},M)/J_{\tilde{G}} \rightarrow  (I_{L}:g)/I_{L}$$ which sends
$\sum\limits_{i=1}^{n-1}r_{i}m_{i}+J_{\tilde{G}}$ to $\sum%
\limits_{i=1}^{n-1}r_{i}m_{i}+I_{L}$ where $m_{1},\dots,m_{n-1}$ are generators
of $M$. If $$\sum\limits_{i=1}^{n-1}r_{i}m_{i}-\sum\limits_{i=1}^{n-1}r_{i}^{\prime
}m_{i}\in J_{\tilde{G}}.$$ Then this implies $$\sum%
\limits_{i=1}^{n-1}(r_{i}-r_{i}^{\prime
})m_{i}\in J_{\tilde{G}} \cap M\subseteq J_{\tilde{G}} \cap
(I_{L},M)=I_{L}.$$ This follows because of  the inclusion $$I_{L}\subseteq J_{\tilde{G}}\cap
(I_{L},M)\subseteq J_{\tilde{G}}\cap (I_{L}:g)=I_{L}$$
as follows by Lemma 3.2 (b).
Hence it's a well define map and clearly a homomorphism.$$\phi \in
Hom_{S/J_{\tilde{G}}}(\omega (S/J_{\tilde{G}}),\omega (S/J_{\tilde{G}%
}))\cong S/J_{\tilde{G}}\text{  (see Lemma 1.3.)}.$$
 So any homomorphism $\omega (S/J_{\tilde{G}})\to \omega (S/J_{\tilde{G}})$ is given by multiplication by an element of $S/J_{\tilde{G}}$. Because $\phi $ is a non-zero homomorphism of degree zero it is in fact an isomorphism. That is $$
Im(\phi )=(I_{L},M)/I_{L}.$$  So finally we get%
$$I_{L}:g=(I_{L},M)\text{ \ } and \text{ \ }hence\text{ \ } (I_{L}:g,J_{\tilde{G}})=(M,J_{\tilde{G}})$$
\end{proof}

\begin{Lemma} The Hilbert series of $%
S/(I_{L}:g,J_{\tilde{G}})$ is
$$H(S/(I_{L}:g,J_{\tilde{G}}),t)=\frac{1+(n-1)t-(n-1)t^{n-2}-t^{n-1}}{%
(1-t)^{n+1}}$$
\end{Lemma}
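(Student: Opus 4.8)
The plan is to reduce everything to a single short exact sequence coming from the identity $(I_{L}:g,J_{\tilde{G}})=(M,J_{\tilde{G}})$ proved in Lemma 3.8. First I would invoke that identity to rewrite $S/(I_{L}:g,J_{\tilde{G}})=S/(M,J_{\tilde{G}})$, so that the quantity to be computed is $H(S/(M,J_{\tilde{G}}),t)$. Since $MS/J_{\tilde{G}}=(J_{\tilde{G}},M)/J_{\tilde{G}}$ is by construction an ideal of $S/J_{\tilde{G}}$ whose quotient is $S/(M,J_{\tilde{G}})$, the third isomorphism theorem furnishes the short exact sequence of graded modules
$$0\rightarrow MS/J_{\tilde{G}}\rightarrow S/J_{\tilde{G}}\rightarrow S/(M,J_{\tilde{G}})\rightarrow 0,$$
and additivity of Hilbert series along it gives
$$H(S/(M,J_{\tilde{G}}),t)=H(S/J_{\tilde{G}},t)-H(MS/J_{\tilde{G}},t).$$

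The first term on the right is already recorded above the statement as $H(S/J_{\tilde{G}},t)=\frac{1+(n-1)t}{(1-t)^{n+1}}$, so the whole problem collapses to computing $H(MS/J_{\tilde{G}},t)$. Here I would use Lemma 3.6, which identifies $MS/J_{\tilde{G}}\cong \omega(S/J_{\tilde{G}})$, the canonical module of the determinantal ring $S/J_{\tilde{G}}$. Because $S/J_{\tilde{G}}$ is Cohen-Macaulay of dimension $n+1$, its Hilbert series and that of its canonical module are related by the standard duality $H(\omega(R),t)=(-1)^{\dim R}H(R,t^{-1})$, suitably normalized. Carrying this out with $\dim R=n+1$ and the explicit $H(S/J_{\tilde{G}},t)$ should produce
$$H(MS/J_{\tilde{G}},t)=\frac{(n-1)t^{n-2}+t^{n-1}}{(1-t)^{n+1}},$$
and substituting into the additivity relation yields exactly the claimed numerator $1+(n-1)t-(n-1)t^{n-2}-t^{n-1}$.

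The step I expect to be delicate is pinning down the correct degree normalization in the duality formula: the raw expression $(-1)^{\dim R}H(R,t^{-1})$ determines the canonical module's Hilbert series only up to an overall power of $t$, and this shift must be fixed by recording where $\omega(S/J_{\tilde{G}})$ is actually generated. From the isomorphism $\omega(S/J_{\tilde{G}})\cong (x_{1},y_{1})^{n-2}S/J_{\tilde{G}}$ used in Lemma 3.6 I would read off that the canonical module is generated by the $n-1$ monomials of degree $n-2$ in $x_{1},y_{1}$; this forces the numerator of $H(MS/J_{\tilde{G}},t)$ to begin in degree $n-2$ with coefficient $n-1$, which is precisely the normalization needed for the displayed formula. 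As an independent sanity check, one verifies that $(1-t)$ divides $1+(n-1)t-(n-1)t^{n-2}-t^{n-1}$ and that the resulting degree-$n$ numerator $1+nt+\cdots+nt^{n-3}+t^{n-2}$ is symmetric, consistent with Theorem 3.7, which asserts that $S/(M,J_{\tilde{G}})$ is Gorenstein of dimension $n$.

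A fully self-contained alternative, if one wishes to avoid invoking the duality formula, would be to compute $H(MS/J_{\tilde{G}},t)$ directly from the explicit presentation $MS/J_{\tilde{G}}\cong (x_{1}^{n-2},x_{1}^{n-3}y_{1},\dots,y_{1}^{n-2})S/J_{\tilde{G}}$, using the description $S/J_{\tilde{G}}\cong K[x_{1},\dots,x_{n},x_{1}t,\dots,x_{n}t]$ from Lemma 3.6; this amounts to counting, in each degree, the monomials in the image of the degree-$(n-2)$ part of $(x_{1},y_{1})^{n-2}$, but it is combinatorially heavier, so I would prefer the duality route and keep the explicit presentation only as the device that fixes the grading shift.
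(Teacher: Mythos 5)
Your proposal is correct and follows essentially the same route as the paper's own proof: reduce via Lemma 3.8 to $S/(M,J_{\tilde{G}})$, use the short exact sequence $0\rightarrow (M,J_{\tilde{G}})/J_{\tilde{G}}\rightarrow S/J_{\tilde{G}}\rightarrow S/(M,J_{\tilde{G}})\rightarrow 0$, and compute $H(\omega(S/J_{\tilde{G}}),t)$ from the duality formula $(-1)^{n+1}H(S/J_{\tilde{G}},1/t)$, fixing the grading shift by the fact that $(M,J_{\tilde{G}})/J_{\tilde{G}}$ has initial degree $n-2$ --- exactly the paper's ``divide by $t^{2}$'' step. Your added Gorenstein-symmetry sanity check via Theorem 3.7 is a nice touch but does not change the argument.
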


\begin{proof}

It is known from \cite{burns} that $$H(\omega (S/J_{\tilde{G}%
}),t)=(-1)^{n+1}H(S/J_{\tilde{G}},\frac{1}{t}).$$
 Therefore,
$$H(\omega (S/J_{\tilde{G}}),t)=\frac{(n-1)t^{n}+t^{n+1}}{(1-t)^{n+1}}.$$

 As we know $$\omega(S/J_{\tilde{G}})\cong (M,J_{\tilde{G}})/J_{\tilde{G}}.$$ From the above formula the initial degree of $\omega(S/J_{\tilde{G}})$ is $n$ while the initial degree of $(M,J_{\tilde{G}})/J_{\tilde{G}}$ is $%
n-2.$ Therefore by dividing the non-zero divisor $t^{2}$ we get the Hilbert
series of $(M,J_{\tilde{G}})/J_{\tilde{G}}$
$$H((M,J_{\tilde{G}})/J_{\tilde{G}},t)=\frac{(n-1)t^{n-2}+t^{n-1}}{(1-t)^{n+1}}.$$

Consider the exact sequence
$$0\rightarrow (M,J_{\tilde{G}})/J_{\tilde{G}}\rightarrow S/J_{\tilde{G%
}}\rightarrow S/(M,J_{\tilde{G}})\rightarrow 0.$$
From Lemma 3.8.$$(I_{L}:g,J_{\tilde{G}})=(M,J_{\tilde{G}}).$$ Therefore $$%
H(S/(I_{L}:g,J_{\tilde{G}}),t)=H(S/J_{\tilde{G}},t)-H((M,J_{\tilde{G}})/J_{%
\tilde{G}},t).$$ Hence we get the required result.
\end{proof}

\begin{Theorem}Hilbert series of $S/I_{C}$ is
$$H(S/I_{C},t)=\frac{(1+t)^{n-1}-t^{2}(1+t)^{n-1}+(n-1)t^{n}+t^{n+1}}{%
(1-t)^{n+1}}.$$
In particular, the multiplicity of $S/I_{C}$ is $e(S/I_{C})=n.$
\end{Theorem}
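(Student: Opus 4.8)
The plan is to compute $H(S/I_C,t)$ by feeding the short exact sequence that defines $S/I_C$ into the Hilbert-series data already assembled in Lemmas 3.8 and 3.9. Since everything in sight is a graded quotient of $S$ of the same dimension $n+1$, the computation reduces to additive bookkeeping of numerator polynomials over the common denominator $(1-t)^{n+1}$.

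First I would record the two basic building blocks. Since the line $L$ is a complete intersection generated by the $n-1$ binomials $f_i=x_iy_{i+1}-x_{i+1}y_i$ of degree $2$, its Koszul resolution gives
$$H(S/I_L,t)=\frac{(1-t^{2})^{n-1}}{(1-t)^{2n}}=\frac{(1+t)^{n-1}}{(1-t)^{n+1}},$$
while the complete-graph ideal already satisfies $H(S/J_{\tilde{G}},t)=\frac{1+(n-1)t}{(1-t)^{n+1}}$, and Lemma 3.9 supplies $H(S/(I_L:g,J_{\tilde{G}}),t)=\frac{1+(n-1)t-(n-1)t^{n-2}-t^{n-1}}{(1-t)^{n+1}}$.

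Next I would compute $H(S/(I_L:g),t)$ from the decomposition $I_L=(I_L:g)\cap J_{\tilde{G}}$ of Lemma 3.2(b). Writing $A=I_L:g$ and $B=J_{\tilde{G}}$, the Mayer--Vietoris sequence
$$0\rightarrow S/(A\cap B)\rightarrow S/A\oplus S/B\rightarrow S/(A+B)\rightarrow 0$$
is exact with $A\cap B=I_L$ and $A+B=(I_L:g,J_{\tilde{G}})$, so additivity of Hilbert series yields
$$H(S/(I_L:g),t)=H(S/I_L,t)-H(S/J_{\tilde{G}},t)+H(S/(I_L:g,J_{\tilde{G}}),t).$$
Substituting the three known numerators, the contributions $1+(n-1)t$ from the last two terms cancel, leaving
$$H(S/(I_L:g),t)=\frac{(1+t)^{n-1}-(n-1)t^{n-2}-t^{n-1}}{(1-t)^{n+1}}.$$

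Finally I would substitute this into the exact sequence $0\rightarrow S/(I_L:g)(-2)\rightarrow S/I_L\rightarrow S/I_C\rightarrow 0$ from the proof of Theorem 3.5; the shift $(-2)$ contributes a factor $t^{2}$, so
$$H(S/I_C,t)=H(S/I_L,t)-t^{2}H(S/(I_L:g),t)=\frac{(1+t)^{n-1}-t^{2}(1+t)^{n-1}+(n-1)t^{n}+t^{n+1}}{(1-t)^{n+1}},$$
which is the asserted formula. For the multiplicity, since $\dim S/I_C=n+1$ the value $e(S/I_C)$ equals the numerator $Q(t)$ evaluated at $t=1$; there $(1+t)^{n-1}$ and $t^{2}(1+t)^{n-1}$ cancel and $(n-1)+1=n$ survives, giving $e(S/I_C)=n$. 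I do not expect a genuine obstacle here: all the structural input (the identity $I_L:g=(I_L,M)$, the linkage of Lemma 3.4, and the canonical-module computation behind Lemma 3.9) has already been done, so the only care needed is tracking the degree shift $t^{2}$ and verifying the cancellation of the $1+(n-1)t$ terms.
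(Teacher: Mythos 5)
Your proposal is correct and follows essentially the same route as the paper: the same Mayer--Vietoris sequence for $I_L=(I_L:g)\cap J_{\tilde{G}}$ combined with Lemma 3.9 to get $H(S/(I_L:g),t)$, then the same multiplication-by-$g$ sequence $0\rightarrow S/(I_L:g)(-2)\rightarrow S/I_L\rightarrow S/I_C\rightarrow 0$ to conclude. The only (welcome) addition is that you explicitly justify $e(S/I_C)=n$ by evaluating the numerator at $t=1$, a step the paper leaves implicit.
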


\begin{proof}Hilbert series of binomial edge ideal of $I_{L}$ is easy to compute because $I_{L}$ is a complete intersection generated by $n-1$ forms of degree $2$. Namely we have
$$H(S/I_{L},t)=\frac{(1-t^{2})^{n-1}}{(1-t)^{2n}}=\frac{%
(1+t)^{n-1}}{(1-t)^{n+1}}.$$
Consider the exact sequence%
$$0\rightarrow S/I_{L}\rightarrow S/I_{L}:g\oplus S/J_{\tilde{G}%
}\rightarrow S/(I_{L}:g,J_{\tilde{G}})\rightarrow 0.$$
Therefore
$$H(S/I_{L}:g,t)=H(S/(I_{L}:g,J_{\tilde{G}}),t)+H(S/I_{L},t)-H(S/J_{\tilde{G}%
},t).$$  So by using Lemma 3.9. We get
$$H(S/I_{L}:g,t)=\frac{(1+t)^{n-1}-(n-1)t^{n-2}-t^{n-1}}{(1-t)^{n+1}}.$$

Consider the another exact sequence and replace $(I_{L},g)=I_{C}$%
$$0\rightarrow S/I_{L}:g(-2)\rightarrow S/I_{L}\rightarrow
S/I_{C}\rightarrow 0.$$

We have $$H(S/I_{C},t)=H(S/I_{L},t)-t^{2}H(S/I_{L}:g,t).$$ and after putting values we get the desired formula.
\end{proof}


\begin{thebibliography}{999}
\bibitem{burns} W. Burns and J. Herzog, Cohen-Macaulay Rings, Cambridge
University Press, 1993.
\bibitem{ene} V. Ene, J. Herzog, T. Hibi, Cohen Macaulay Binomial edge ideals, Nagoya Math. J. 204 (2011),
57-68.

\bibitem{goto} S Goto, Approximately Cohen-Macaulay Rings. J. Algebra 76 (1982),
214-225.
\bibitem{herzog} J. Herzog, T. Hibi, F. Hreinsdotir, T.Kahle, J, Rauh,
Binomial edge ideals and conditional independence statements, Adv. Appl. Math. 45 (2010), 317-333.
\bibitem{peskine} C. Peskine and L. Szpiro, Liaison des vari\'{e}t\'{e}s alg%
\'{e}briques. I, Inv. math, 26 (1974), 271-302.
\bibitem{Schenzel} P. Schenzel,
On the use of local cohomology in algebra and geometry.
Elias, J. (ed.) et al., Six lectures on commutative algebra.
Lectures presented at the summer school, Bellaterra, Spain,
July 16-26, 1996. Basel: Birkhäuser. Prog. Math. 166 (1998), 241-292.
\bibitem{villarreal} R. H. Villarreal, Monomial Algebras, New York: Marcel Dekker Inc. (2001).


\end{thebibliography}
\end{document}